\documentclass[11pt]{amsart}
\usepackage[utf8]{inputenc} 
\usepackage{amsmath}
\usepackage{amsthm}
\usepackage{amsfonts}
\usepackage{natbib}
\usepackage{amssymb}
\usepackage{enumerate}
\usepackage{graphicx}
\citestyle{plain}
\theoremstyle{plain}
\newtheorem{thm}{Theorem}[section]

\newtheorem{cor}[thm]{Corollary}
\newtheorem{question}[thm]{Question}

\newtheorem{lemma}[thm]{Lemma}
\theoremstyle{definition}
\newtheorem{dfn}[thm]{Definition}
\theoremstyle{remark}
\newtheorem*{rmk}{Remark}

\newtheorem{example}[thm]{Example}
\newtheorem*{acknowledgments}{Acknowledgments}
\usepackage{mathtools}
\usepackage{mathbbol}
\usepackage{wasysym}
\usepackage{amssymb}
\usepackage{mathcomp}
\usepackage{MnSymbol}
\usepackage{comment}
\usepackage{enumerate}
\usepackage{ushort}

\usepackage{tabularx,booktabs,tikz}
\usepackage{caption}
\usepackage{amsmath}
\usepackage{amsfonts}

\usepackage{amscd}
\usepackage{amsthm}
\usepackage{amssymb} \usepackage{latexsym}
\usepackage{amscd}
\usepackage{amsthm}
\usepackage{amssymb} \usepackage{latexsym}
\usepackage{eufrak}
\usepackage{euscript}
\usepackage{epsfig}
\usepackage{graphics}
\usepackage{array}
\usepackage{enumerate}
\usepackage{dsfont}
\usepackage{color}
\usepackage{wasysym}
\usepackage{hyperref}
\usepackage{pdfsync}

\newcommand{\bel}[1]{\begin{equation}\label{#1}}

\newcommand{\be}{\begin{equation}}

\begin{document}
\title[Actions of some groups on the product of Hadamard spaces]{Remarks on the actions of some groups on the product of Hadamard spaces}
\author{Zunwu He}
\address{Zunwu He:School of Mathematical Sciences, Fudan University, Shanghai 200433, China}
\email{hzw@fudan.edu.cn}


\maketitle
\begin{abstract}
For a product of Hadamard spaces $X=X_1\times X_2$ on which some group $G\subset Is(X_1)\times Is(X_2)$ acting, G. Link \cite{1} introduced the growth rate $\delta_{\theta}$ of slope $\theta$ to construct a $G-$invariant $(b,\theta)$-density.

First, we show that $\delta_{\theta}$ is continuous in the slope $\theta$ as the above group action with some mild condition.

Second, we give a negative answer to a question raised by G. Link in \cite{1} in general. And further results about the question are discussed.

In the end, we can extent the $(b,\theta)$-densities in some reasonable condition.
\end{abstract}

\section{Introduction}
Recall that a \emph{Hadamard space} means a complete simply connected metric space with non-positive Alexandrov curvature. Let $X=X_1\times X_2$ be a product of proper
Hadamard spaces with the standard product metric, one can compactify $X$, $X_1$ and $X_2$ by giving the corresponding canonical geometric boundaries $\partial X$, $\partial X_1$
and $\partial X_2$ (\cite{7}, \cite{9}).
A basic fact is that $\partial_{reg} X=\partial X_1\times \partial X_2\times (0,\pi/2)$, $\partial_{sing}X=\partial X_1\bigcup \partial X_2$ and $\partial X=\partial_{reg} X\cup \partial_{sing} X$ (\cite{2}). The third component in the first term is called the slope of a point in $\partial X_{reg}$. In particular, $\partial X_1$ is of slope $0$ and $\partial X_2$ is of slope $\pi/2$.

Consider a group $G\subset Is(X_1)\times Is(X_2)$ acting properly on the space $X$, and $G$ contains a pair of isometries which projects a pair of independent rank one elements in each factor and satisfies \emph{the regular growth condition}(see the definition in the next section).

In order to investigate the limit set of Fuchsian groups acting on the hyperbolic plane, Patterson used the associated Poincar{\'e} series to construct a class of remarkable conformal measures and densities (\cite{3}). Sullivan made a profound generalization of Patterson's results (\cite{5}). He established the conformal measures and densities theory in the view of analyses, geometries and dynamics for discrete isometrical groups of the classical hyperbolic spaces, and related the growth rate to the Hausdorff dimension of the limit set. Coornaert developed the Patterson-Sullivan theory for the proper actions on Gromov hyperbolic spaces (\cite{Coornaert;1993}). Dal'bo et al. constructed the Patterson-Sullivan theory for Hadamard manifolds (\cite{Dal'bo;2000}).  Yang generalized the conformal measures and densities to the relatively hyperbolic groups by the so-called Partial Shadow Lemma (\cite{10}).

However, it is not efficacious to detect the geometry of limit sets for the higher rank symmetric spaces and Euclidean buildings by the generalized Patterson-Sullivan measures and densities (\cite{Albuquerque:1999}). Aimed to overcome this difficulty, Quint and Link independently introduced a class of generalized conformal densities and measures on each invariant subsets of the limit sets (\cite{4}, \cite{Link;2004}, \cite{2}, \cite{1}, \cite{15}, \cite{14}). Related problems were studied by Marc for graphs of convex cocompact graphs in a product of rank one symmetric spaces (\cite{Marc:1993}), and by Dal'bo et al. for discrete isometrical groups of a product of pinched negatively curved Hadamard manifolds (\cite{Dal'Bo_Kim:2008}).

Link introduced the growth rate of $G$ with some slope, which is quite important to construct the generalized measures and densities (\cite{2}, \cite{1}, \cite{15}, \cite{14}). She also showed that the growth rate is semi-upper continuous in the slope. We can further improve this result under some mild assumptions.

It is formulated as follows.
\begin{thm}\label{main theorem1}
The growth rate $\delta_{\theta}$ of $G$ with slope $\theta$ is continuous in $\theta\in [0,\pi/2]$, where $G$ is as above.
\end{thm}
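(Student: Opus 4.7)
The plan is to establish the lower semi-continuity of $\theta \mapsto \delta_\theta$, as Link has already proved upper semi-continuity in \cite{1}; combining the two yields continuity on all of $[0,\pi/2]$. Concretely, for $\theta_0 \in [0,\pi/2]$ and any sequence $\theta_n \to \theta_0$, I aim to show $\liminf_n \delta_{\theta_n} \geq \delta_{\theta_0}$ by exhibiting, for every target slope $\theta'$ sufficiently close to $\theta_0$, an orbit counting family at slope $\theta'$ that is nearly as large at the exponential scale as the family realizing $\delta_{\theta_0}$.

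Fix a base point $o=(o_1,o_2) \in X$ and, for $g=(g_1,g_2) \in G$, write $\vec{d}(g) = (d(o_1,g_1 o_1),\, d(o_2,g_2 o_2))$, with $|\vec{d}(g)|$ its Euclidean norm and $\mathrm{slope}(g)$ the angle with the $X_1$-axis. By definition of $\delta_{\theta_0}$, for each $\epsilon>0$ there exist arbitrarily large $R$ and families $\mathcal{F}_R \subset G$ of cardinality at least $e^{(\delta_{\theta_0}-\epsilon)R}$ whose members satisfy $|\vec{d}(g)| \leq R$ and $\mathrm{slope}(g)$ in a narrow cone about $\theta_0$. The key construction uses the independent rank-one pairs $(a_1,a_2),(b_1,b_2) \in G$ supplied by the regular growth condition: by forming products such as $a^m b^n$ with appropriately chosen exponents (and conjugating if necessary), I can produce elements $h^{(\rho)} \in G$ whose displacement vector has an essentially prescribed slope $\rho$ and a controlled, arbitrarily large norm. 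Right-multiplying each $g \in \mathcal{F}_R$ by some $h^{(\rho)}$ with $\rho$ on the appropriate side of $\theta_0$ and norm of order $R|\theta'-\theta_0|$, the resulting element $g h^{(\rho)}$ satisfies $\mathrm{slope}(g h^{(\rho)}) \approx \theta'$ and $|\vec{d}(g h^{(\rho)})| \leq R(1+C|\theta'-\theta_0|)$. Since right multiplication is injective in $G$, one then obtains
\[
e^{\delta_{\theta'} R(1+C|\theta'-\theta_0|)} \;\gtrsim\; |\mathcal{F}_R| \;\geq\; e^{(\delta_{\theta_0}-\epsilon)R},
\]
and letting $R\to\infty$ and then $\epsilon\to 0$ yields $\delta_{\theta'}(1+C|\theta'-\theta_0|) \geq \delta_{\theta_0}$, whence $\liminf_{\theta'\to\theta_0}\delta_{\theta'} \geq \delta_{\theta_0}$.

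The principal obstacle is establishing the near-additivity $\vec{d}(g h^{(\rho)}) \approx \vec{d}(g) + \vec{d}(h^{(\rho)})$ in each factor with an error bounded uniformly in $g \in \mathcal{F}_R$. This is where the rank-one hypothesis in each factor is essential: the contracting property of rank-one axes and the Morse lemma guarantee that geodesic triangles with two very long sides fellow-travelling such axes have controlled shape, so the displacement of the composition is close to the sum of displacements coordinatewise. Realizing $\vec{d}(h^{(\rho)})$ with essentially arbitrary slope relies on the independence of the two rank-one pairs, which gives two asymptotically independent displacement directions in $\mathbb{R}^2_{\geq 0}$, so that combinations of their powers sweep a dense set of slopes. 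The boundary cases $\theta_0 \in \{0,\pi/2\}$ need only one-sided slope shifts and are handled by restricting $\rho$ to the interior side of $\theta_0$; continuity at the endpoints then reduces to the interior argument together with Link's upper semi-continuity.
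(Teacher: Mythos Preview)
Your reduction to lower semi-continuity is right, but the paper's route is both shorter and avoids a genuine gap in your argument. The paper does no orbit counting here at all: it simply invokes Link's earlier result (Theorem~E of \cite{2}, recorded as Lemma~\ref{con}) that the homogeneous extension $\Psi_G(x)=\|x\|\,\delta_{\theta(x)}$ is \emph{concave} on $\mathbb{R}^2_{\geq 0}$. For $\theta_j\to\theta$ with, say, $\theta_j<\theta$, one writes the direction $H_{\theta_j}$ as a positive multiple of $t_jH_\theta+(1-t_j)H_0$; concavity and homogeneity give $\delta_{\theta_j}\geq t_j\delta_\theta+(1-t_j)\delta_0$, and the regular growth condition is used only to ensure $\delta_0\geq 0$ so that the right side tends to $\delta_\theta$ as $t_j\to 1$. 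That is the whole proof.

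Your proposal is effectively an attempt to reprove a local form of this concavity from scratch, and it contains a real error. You claim the two rank-one pairs yield ``two asymptotically independent displacement directions in $\mathbb{R}^2_{\geq 0}$'', but the hypothesis says no such thing: ``independent rank one'' means $g_1,h_1$ have disjoint fixed-point sets in $\partial X_1$ (and likewise in $X_2$); the translation vectors $\vec d(g),\vec d(h)\in\mathbb{R}^2_{\geq 0}$ may well be parallel (both of slope $\pi/4$, for example). So your mechanism for manufacturing $h^{(\rho)}$ of prescribed slope $\rho$ from powers of these two elements is unjustified. The regular growth condition does guarantee elements of every slope, but those are not known to be rank one, so your appeal to contracting/Morse behaviour for the near-additivity $\vec d(gh^{(\rho)})\approx\vec d(g)+\vec d(h^{(\rho)})$ no longer applies. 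Even when $h$ is rank one in each factor, additivity can fail for those $g\in\mathcal F_R$ with $g_i^{-1}o_i$ shadowing the attracting end of $h_i$; handling this uniformly over $\mathcal F_R$ requires a ping-pong replacement argument you do not supply. The clean fix is exactly what the paper does: cite the concavity of $\Psi_G$ and deduce lower semi-continuity in two lines.
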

By Theorem \ref{main theorem1}, we can get some important corollaries in the sequel.

\begin{cor}
There exists some $\theta_{\star}\in [0,\pi/2]$ such that
$$\delta_{\theta_{\star}}=\max\limits_{\theta\in[0,\pi/2]}\delta_{\theta}.$$
\end{cor}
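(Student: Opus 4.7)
The proof of the corollary is essentially a one-line application of the extreme value theorem. By Theorem \ref{main theorem1}, the map $\theta \mapsto \delta_\theta$ is continuous on the closed interval $[0,\pi/2]$. Since $[0,\pi/2]$ is compact and nonempty, any continuous real-valued function on it attains its supremum; hence there exists some $\theta_\star \in [0,\pi/2]$ for which $\delta_{\theta_\star} = \max_{\theta \in [0,\pi/2]} \delta_\theta$, which is precisely the claim.

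A minor point to check is that $\delta_\theta$ is finite (hence genuinely real-valued) on the whole closed interval, including the singular slopes $\theta = 0$ and $\theta = \pi/2$. This follows from the construction of the slope-dependent growth rate under the standing assumptions on $G$, namely the properness of the action, the regular growth condition, and the existence of a pair of isometries projecting to independent rank one elements in each factor. At the endpoints $\theta \in \{0, \pi/2\}$ the exponent $\delta_\theta$ reduces to an orbital growth exponent associated to the projection of $G$ to the corresponding factor $X_i$, which is finite by classical arguments; at interior slopes, finiteness is built into Link's construction of the $(b,\theta)$-density.

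There is no genuine obstacle located within the corollary itself: all the substance is already packaged inside Theorem \ref{main theorem1}, and the corollary is then a formal consequence of compactness plus continuity. The actual difficulty is concentrated in Theorem \ref{main theorem1}, where one must upgrade Link's previously established semi-upper continuity of $\theta \mapsto \delta_\theta$ to two-sided continuity by producing a matching lower semi-continuity estimate. Once that upgrade is in place, this corollary requires no further geometric or dynamical input.
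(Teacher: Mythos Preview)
Your argument is correct and matches the paper's intended approach: the corollary is stated without proof immediately after Theorem~\ref{main theorem1}, so the extreme value theorem on the compact interval $[0,\pi/2]$ is exactly what is meant. One small remark: your justification of finiteness is slightly roundabout---under the standing hypotheses the paper already records $0\le\delta_\theta\le\delta(G)<\infty$ for every $\theta\in[0,\pi/2]$ (see inequality~(\ref{growth})), so no separate endpoint analysis or appeal to the $(b,\theta)$-density construction is needed.
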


\begin{rmk}
In \cite{1}, G. Link showed that such $\theta_\star$ is unique with $\delta(G)= \delta_{\theta_\star}$, where $G$ is as above which does not necessarily satisfy the regular growth condition.
\end{rmk}

Combining some extra work with Theorem \ref{main theorem1}, one can propose some interesting results.




First, we can get an interesting result on $\delta_\theta$.
\begin{thm}\label{pos exp}
It holds that the growth rate $\delta_\theta$ of the action $G\curvearrowright X=X_1\times X_2$ with slope $\theta$ is positive for any $\theta\in (0,\pi/2)$, where $G$ is given as above.
\end{thm}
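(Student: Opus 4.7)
The plan is to combine Theorem~\ref{main theorem1} (continuity of $\delta_\theta$) with an explicit ping-pong construction that produces exponentially many orbit points of any prescribed regular slope.

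First I would extract from the hypothesis of a pair of ping-pong rank one isometries together with the regular growth condition the following: for any $\theta \in (0,\pi/2)$, one can find two rank one loxodromic elements $h_1,h_2 \in G$ whose translation vectors $v_1,v_2 \in \mathbb{R}_{>0}^2$ (with the $i$th component equal to the translation length of the $X_i$-projection) are linearly independent and whose slopes bracket $\theta$, i.e., $\theta$ lies in the open cone spanned by $v_1$ and $v_2$. The regular growth condition is exactly what guarantees that slopes can be realized on both sides of $\theta$.

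Next, one passes to sufficiently large powers $\phi_i := h_i^N$ so that the standard ping-pong inequalities hold on $\partial X$, yielding a free positive subsemigroup generated by $\phi_1,\phi_2$. For a positive word $w = \phi_{i_1}\cdots\phi_{i_n}$ of combinatorial length $n$, one verifies that the translation vector of $w$ agrees with $N\sum_{k=1}^n v_{i_k}$ up to an additive error bounded independently of $n$ (arising only from the relative position of the axes of $\phi_1,\phi_2$ in each factor). Consequently $d(x_0, w\cdot x_0) \leq CnN$ for a uniform constant $C$, and the slope of the displacement of a fixed base point $x_0$ by $w$ converges to the slope of $\sum_k v_{i_k}$ as $n \to \infty$.

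Fixing $\alpha \in (0,1)$ so that $\alpha v_1 + (1-\alpha)v_2$ points in direction $\theta$, and setting $a_n := \lfloor \alpha n \rfloor$, the number of distinct words of length $n$ with $a_n$ copies of $\phi_1$ interleaved in all possible orders with $n-a_n$ copies of $\phi_2$ is $\binom{n}{a_n} \geq \exp\bigl(nH(\alpha) - o(n)\bigr)$ by Stirling, where $H$ denotes the binary entropy. All these words yield distinct orbit points within distance $CnN$ of $x_0$ whose slope converges to $\theta$; this gives $\delta_\theta \geq H(\alpha)/(CN) > 0$.

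The main obstacle lies in the initial step: producing a bracketing pair $h_1,h_2$ for \emph{every} $\theta \in (0,\pi/2)$, not merely for those $\theta$ in the interior of the limit cone. For $\theta$ inside the limit cone this follows directly from the regular growth condition. To reach the full open interval I would combine this with Theorem~\ref{main theorem1} and with the corollary that the maximizer $\theta_\star$ lies in $(0,\pi/2)$: continuity of $\delta_\theta$ together with the positivity at $\theta_\star$ forces the slope set of rank one elements to be dense in $(0,\pi/2)$, closing the gap and completing the argument.
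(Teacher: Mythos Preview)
Your ping-pong construction is a reasonable strategy for producing exponential growth at slopes strictly between the slopes of two given rank one elements, but the closing paragraph contains a genuine gap. You invoke ``the corollary that the maximizer $\theta_\star$ lies in $(0,\pi/2)$,'' yet no such corollary is available: the paper only establishes $\theta_\star\in[0,\pi/2]$, and in fact later constructs an explicit example (a subgroup of $F_2\times F_N$) for which $\theta_\star=\pi/2$. So the premise you rely on is false in general. Moreover, the claim that ``continuity of $\delta_\theta$ together with positivity at $\theta_\star$ forces the slope set of rank one elements to be dense in $(0,\pi/2)$'' is unjustified and circular: continuity of $\delta_\theta$ says nothing a priori about which slopes are realized by rank one elements, and positivity of $\delta_\theta$ on all of $(0,\pi/2)$ is precisely the conclusion you are trying to establish. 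Without the bracketing pair for \emph{every} $\theta$, your entropy count never gets off the ground near the ends of the interval.

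The paper sidesteps the bracketing problem entirely by exploiting the concavity of $\Psi_G(x)=\|x\|\delta_{\theta(x)}$ (Lemma~\ref{con}). One first observes that $\delta(G)=\delta_{\theta_\star}>0$ because the two independent rank one projections are independent contracting elements, so high powers generate a free subgroup of $G$. Then for any $\theta\in(0,\pi/2)$ one places $H_\theta=(\cos\theta,\sin\theta)$ on the segment from $H_{\theta_\star}$ to either $H_0$ or $H_{\pi/2}$ (depending on whether $\theta<\theta_\star$ or $\theta>\theta_\star$); concavity together with the regular growth condition $\delta_0,\delta_{\pi/2}\ge 0$ immediately gives $\delta_\theta\ge t_\star\delta_{\theta_\star}>0$ (respectively $(1-t_\star)\delta_{\theta_\star}>0$) for the appropriate convex parameter $t_\star\in(0,1)$. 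This short interpolation argument replaces your ping-pong machinery and works regardless of where $\theta_\star$ sits in $[0,\pi/2]$.
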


We list some theorems of G. Link in \cite{1}, where the group $G$ does not necessarily satisfy the regular growth condition. They involve the so-called $(b,\theta)$-densities, which are the generalizations of conformal densities in \cite{Albuquerque:1999}, \cite{Dal'Bo_Kim:2008}, \cite{Marc:1993}, \cite{3}, \cite{5}. The analogs of higher product of rank one Hadamard spaces is referred to \cite{15}.
\begin{thm} [Theorem B, \cite{1}]
If $\theta\in (0,\pi/2)$ is such that $\delta_\theta(G)>0$, then there exists a $(b,\theta)$-density for some parameters $b=(b_1,b_2)\in \mathbb{R}^2$.
\end{thm}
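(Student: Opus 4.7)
The plan is to adapt the Patterson--Sullivan construction to the product setting, with a slope constraint built into the Poincar\'e series so that the resulting limit measure lives on the stratum $\partial X_1 \times \partial X_2 \times \{\theta\}$. Fix a basepoint $x = (x_1, x_2) \in X$ and, for each $g \in G$, consider the displacement vector
\begin{equation*}
H(g,x) := \bigl(d_1(x_1, g x_1),\, d_2(x_2, g x_2)\bigr) \in \mathbb{R}_{\geq 0}^2.
\end{equation*}
For a small $\varepsilon > 0$ I restrict to the group elements whose displacement has argument within $\varepsilon$ of $\theta$ and form the slope-restricted Poincar\'e series
\begin{equation*}
P_s^{\varepsilon}(x) := \sum_{\substack{g \in G \\ |\arg H(g,x) - \theta| < \varepsilon}} \exp\bigl(-s(\cos\theta \cdot d_1(x_1, g x_1) + \sin\theta \cdot d_2(x_2, g x_2))\bigr),
\end{equation*}
whose critical exponent, by the definition of the growth rate of slope $\theta$, equals $\delta_\theta$ for all sufficiently small $\varepsilon$.

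Next I would apply Patterson's trick: if $P_{\delta_\theta}^\varepsilon$ already diverges, use the weights as given; otherwise insert a slowly varying function $h$ of $d(x, gx)$ so that the modified series $\widetilde P_s^\varepsilon$ retains critical exponent $\delta_\theta$ but now diverges there. Form the atomic measures
\begin{equation*}
\mu_{s,x} := \frac{1}{\widetilde P_s^\varepsilon(x)}\, \sum_{g} h(d(x, g x))\, e^{-s(\cos\theta\, d_1 + \sin\theta\, d_2)}\, \delta_{g x}
\end{equation*}
on the compact space $X \cup \partial X$, and extract a weak-$*$ limit $\mu_x$ along some $s_n \downarrow \delta_\theta$. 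Divergence of $\widetilde P_s^\varepsilon$ at $s=\delta_\theta$ forces $\mu_x$ to concentrate on $\partial X$, while the assumption $\delta_\theta > 0$ guarantees that the mass does not vanish in the limit.

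The conformality is then verified by the standard Patterson calculation. For a fixed $g \in G$ and any sequence $h_n x \to \xi = (\xi_1, \xi_2, \theta) \in \partial X_{\mathrm{reg}}$, each factorwise quantity $d_i(x_i, h_n x_i) - d_i(g^{-1} x_i, h_n x_i)$ converges to the Busemann cocycle $\mathcal{B}_{\xi_i}(x_i, g x_i)$, so weighting by $\cos\theta$ and $\sin\theta$ and passing to the limit in the atomic measures yields the transformation rule
\begin{equation*}
\frac{d(g_* \mu_x)}{d\mu_x}(\xi) = \exp\bigl(-b_1 \mathcal{B}_{\xi_1}(x_1, g x_1) - b_2 \mathcal{B}_{\xi_2}(x_2, g x_2)\bigr),
\end{equation*}
with parameters $b = (b_1, b_2) = \delta_\theta \cdot (\cos\theta, \sin\theta)$, which is precisely the defining property of a $(b,\theta)$-density.

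The main obstacle will be pinning down the support of $\mu_x$ on the correct slope stratum. A priori a weak-$*$ limit of $\{g_n x\}$ could accumulate on the singular boundary $\partial X_1 \cup \partial X_2$, or drift to a nearby slope in $[\theta-\varepsilon,\theta+\varepsilon]$. By construction every orbit point contributing to $P_s^\varepsilon$ sits in a slope-$\theta$ sector, so its boundary accumulation points are automatically trapped in $\partial X_1 \times \partial X_2 \times [\theta-\varepsilon,\theta+\varepsilon]$; the regular growth condition rules out escape to the singular boundary. Invoking Theorem~\ref{main theorem1}, the critical exponent varies continuously in $\theta$, which allows a diagonal argument with $\varepsilon_n \to 0$: take measures $\mu^{(n)}_x$ associated with progressively thinner slope sectors, whose total masses remain comparable thanks to the continuity, and extract a further weak-$*$ limit supported exactly at slope $\theta$. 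The positivity $\delta_\theta>0$ is what prevents this diagonal mass from collapsing, thereby producing a nontrivial $(b,\theta)$-density.
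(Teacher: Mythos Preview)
This statement is not proved in the present paper: it is quoted as Theorem~B of Link~\cite{1} and used only as a black box (together with Theorem~\ref{pos exp}) to obtain Corollary~\ref{e4}. So there is no proof in the paper to compare against; the argument lives entirely in~\cite{1}.

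That said, your sketch is the right general mechanism (a Patterson--Sullivan construction) but it departs from Link's actual argument in one essential respect, and the departure creates a gap. You restrict the Poincar\'e series to an $\varepsilon$-sector of slope~$\theta$, use the fixed weight $e^{-s(\cos\theta\,d_1+\sin\theta\,d_2)}$, and then run a diagonal limit $\varepsilon_n\to 0$ to pin the support at slope~$\theta$. Link instead exploits the concavity of $\Psi_G$ (Lemma~\ref{con} here): she chooses $b=(b_1,b_2)$ to be a supporting linear functional to $\Psi_G$ at the direction $(\cos\theta,\sin\theta)$, and then sums over \emph{all} of $G$ with weight $e^{-s(b_1 d_1+b_2 d_2)}$. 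Since $b_1\cos\gamma+b_2\sin\gamma\ge\delta_\gamma$ for every $\gamma$, with equality at $\gamma=\theta$, contributions from other slopes are dominated at the critical parameter and the limit measure lands on slope~$\theta$ without any sector restriction or diagonal argument. This is precisely the content of Proposition~6.6 of~\cite{1}, alluded to later in the present paper (see the Claim inside the proof of Theorem~\ref{0density}). Note in particular that the resulting $b$ is generally \emph{not} your $\delta_\theta(\cos\theta,\sin\theta)$; it is the tangent direction to the concave growth indicator.

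Your diagonal step is where the sketch breaks. First, you invoke Theorem~\ref{main theorem1} and the regular growth condition to control both the support and the comparability of masses as $\varepsilon_n\to 0$; but Link's Theorem~B is explicitly recorded here as holding \emph{without} the regular growth condition, so your argument proves a strictly weaker statement. Second, even granting continuity of $\theta\mapsto\delta_\theta$, comparability of the total masses of the $\mu_x^{(n)}$ does not follow from continuity of the critical exponent alone: the Patterson correction $h$ and the normalising constants depend on $\varepsilon_n$, and you have given no uniform control across~$n$. Link's supporting-functional device avoids both problems simultaneously.
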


\begin{thm} [Theorem C, \cite{1}]
If  a $G$-invariant $(b,\theta)$-density exists for some $\theta\in(0,\pi/2)$, then $\delta_{\theta}\leq b_1cos\theta+b_2sin\theta,$ where $b=(b_1,b_2)\in \mathbb{R}^2.$
\end{thm}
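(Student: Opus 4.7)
The plan is to adapt the classical Patterson--Sullivan upper bound argument to the slope-$\theta$ product setting. Denote the given $G$-invariant $(b,\theta)$-density by $\mu=(\mu_x)_{x\in X}$: each $\mu_x$ is a finite positive Borel measure on $\partial X$ supported on the slope-$\theta$ slice $\partial X_1\times\partial X_2\times\{\theta\}$, and satisfies the conformal transformation rule
$$\frac{d\mu_{y}}{d\mu_{x}}(\xi)=e^{-b_1B_1(y,x;\xi_1)-b_2B_2(y,x;\xi_2)}$$
for $\xi=(\xi_1,\xi_2,\theta)$, where $B_i$ are the Busemann functions on $X_i$. Fix a basepoint $x\in X$. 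I would show that, for every small $\eta>0$, the restricted Poincar\'e series
$$\mathcal{P}_\theta^\eta(s):=\sum_{\substack{g\in G\\ \mathrm{slope}(x,gx)\in(\theta-\eta,\theta+\eta)}}e^{-sd(x,gx)}$$
converges at $s=b_1\cos\theta+b_2\sin\theta$, which forces its critical exponent $\delta_\theta$ to satisfy the desired inequality.

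The key step is a shadow lemma adapted to slope $\theta$. For small $r,\eta>0$ and $g\in G$ with $R:=d(x,gx)$ large and $\mathrm{slope}(x,gx)\in(\theta-\eta,\theta+\eta)$, let $\mathcal{O}_g\subset\mathrm{supp}(\mu_x)$ denote the shadow at $x$ of the ball $B(gx,r)$. Since any $\xi\in\mathcal{O}_g$ lies on a product geodesic whose factor projections $r$-fellow-travel those of the geodesic from $x$ to $gx$ (of lengths $R\cos\theta+O(\eta R)$ and $R\sin\theta+O(\eta R)$ respectively), the Busemann cocycle obeys
$$b_1 B_1(gx,x;\xi_1)+b_2 B_2(gx,x;\xi_2)=-R\bigl(b_1\cos\theta+b_2\sin\theta\bigr)+O(r)+O(\eta R).$$
Applying the conformal rule, one gets
$$\mu_x(\mathcal{O}_g)\geq c(r)\,e^{-(b_1\cos\theta+b_2\sin\theta)R-O(\eta R)}\,\mu_{gx}(\mathcal{O}_g),$$
with $\mu_{gx}(\mathcal{O}_g)\ge c'(r)>0$ uniformly by a standard compactness argument using $G$-invariance of the family and positivity of $\mu$ on its support.

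Finally, properness of the $G$-action on $X$ gives bounded multiplicity: for $r$ small enough, each $\xi\in\partial X$ lies in at most $N=N(r)$ shadows $\mathcal{O}_g$ as $g$ ranges over the slope-$\theta$ part of the orbit at comparable distance from $x$. Summing over such $g$ with $d(x,gx)\le R$ and using finiteness of $\mu_x(\partial X)$ yields
$$\#\{g\in G:d(x,gx)\le R,\ \mathrm{slope}(x,gx)\in(\theta-\eta,\theta+\eta)\}\le C e^{(b_1\cos\theta+b_2\sin\theta)R+O(\eta R)}.$$
Taking $(\log\cdot)/R$, letting $R\to\infty$, and then $\eta\to 0$ produces $\delta_\theta\le b_1\cos\theta+b_2\sin\theta$. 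The hard part will be the shadow lemma in the product geometry: the two Busemann cocycles must be controlled simultaneously when the direction from $x$ to $gx$ is only approximately of slope $\theta$, with error terms genuinely sublinear as $\eta\to 0$; and the uniform positivity $\mu_{gx}(\mathcal{O}_g)\ge c'(r)$ requires excluding concentration of $\mu$-mass at the shadow boundary, which uses that the density is positive on the full slope-$\theta$ slice together with $G$-invariance.
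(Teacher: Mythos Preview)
This statement is Theorem~C of Link~\cite{1}, quoted here without proof; the present paper contains no argument for it, so there is nothing in-paper to compare against. Your outline is the standard Patterson--Sullivan route and is essentially Link's strategy in~\cite{1},~\cite{2}: a slope-$\theta$ shadow lemma turns the conformal rule into an exponential lower bound for $\mu_x(\mathcal{O}_g)$, and bounded overlap of annular shadows (which follows from CAT(0) convexity together with properness of the action, for any fixed $r$, not just small $r$) then controls $|A_{\eta,\theta}(o,n)|$.

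One point in your sketch needs correction. You justify the uniform lower bound $\mu_{gx}(\mathcal{O}_g)\ge c'(r)$ by appealing to the density being ``positive on the full slope-$\theta$ slice''; but the definition only asks $\mathrm{supp}(\mu_x)\subset L_G\cap\partial X_\theta$, not full support on $\partial X_1\times\partial X_2\times\{\theta\}$. The genuine argument (Link's shadow lemma in~\cite{1},~\cite{2}) uses the rank-one hypothesis: the pair of independent rank-one isometries in each factor forces the complement of a large shadow $g^{-1}\mathcal{O}_g$ inside $L_G\cap\partial X_\theta$ to be small, so a subsequence with $\mu_x(g_n^{-1}\mathcal{O}_{g_n})\to 0$ would concentrate $\mu_x$ at a single point of $\partial X_\theta$, contradicting $G$-invariance and the dynamics of the rank-one elements. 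Your Busemann estimate and the annular multiplicity bound are fine.
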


\begin{thm} [Theorem E, \cite{1}]
If $\nu$ is a $(b,\theta)$-density for $\theta\in (0,\pi/2)$ with $\delta_\theta>0$, then a radial limit point is not a point mass for $\nu.$
\end{thm}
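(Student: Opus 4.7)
The plan is to adapt Sullivan's classical argument for conformal densities to the $(b,\theta)$-setting. Suppose toward contradiction that $\xi = (\xi_1,\xi_2,\theta) \in \partial_{reg}X$ is a radial limit point with $\nu_x(\{\xi\}) = a > 0$. By the definition of \emph{radial}, I pick a sequence $(g_n) \subset G$ with $g_n x \to \xi$ and with $g_n x$ staying within uniform bounded distance of the slope-$\theta$ geodesic ray from $x$ to $\xi$; in particular $d(x,g_n x) \to \infty$.

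Combining the $G$-equivariance $(g_n)_* \nu_x = \nu_{g_n x}$ with the $(b,\theta)$-conformal transformation rule, a direct computation yields
\[
\nu_x\bigl(\{g_n^{-1}\xi\}\bigr) \;=\; \exp\!\Bigl(b_1\,\beta^1_{\xi_1}\!\bigl(x_1,(g_n x)_1\bigr) + b_2\,\beta^2_{\xi_2}\!\bigl(x_2,(g_n x)_2\bigr)\Bigr)\cdot a,
\]
where $\beta^i$ denotes the Busemann function on the factor $X_i$. Since a slope-$\theta$ geodesic of the product $X$ projects to $X_i$ as a geodesic of speed $\cos\theta$ (for $i=1$) or $\sin\theta$ (for $i=2$), and since $(g_n x)_i$ stays near the projected ray to $\xi_i$, the two Busemann terms grow as $\cos\theta \cdot d(x,g_n x)+O(1)$ and $\sin\theta \cdot d(x,g_n x)+O(1)$ respectively. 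Theorem C applied to $\nu$ then gives $b_1\cos\theta + b_2\sin\theta \ge \delta_\theta > 0$, so the exponent tends to $+\infty$; hence $\nu_x(\{g_n^{-1}\xi\}) \to \infty$.

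This contradicts the finiteness of the Borel measure $\nu_x$, since every atom satisfies $\nu_x(\{\eta\}) \le \nu_x(\partial X) < \infty$. Therefore the assumption $a > 0$ fails, proving $\xi$ is not an atom of $\nu_x$.

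The main technical input is the linear growth of each coordinate Busemann function along the slope-$\theta$ ray; this is a standard product-geometry computation that makes essential use of $\theta \in (0,\pi/2)$ so that both $\cos\theta$ and $\sin\theta$ are strictly positive. The hypothesis $\delta_\theta > 0$ enters precisely to force the exponent to diverge, without which the transformation rule alone could leave $\nu_x(\{g_n^{-1}\xi\})$ bounded and the argument would collapse.
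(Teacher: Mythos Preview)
The paper does not supply its own proof of this statement: it is quoted verbatim as Theorem~E of Link~\cite{1} and used as a black box to deduce Corollary~\ref{e6}. There is therefore nothing in the present paper to compare your argument against.

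That said, your proposal is the standard Sullivan-type atom argument and is essentially correct in outline. Two small points deserve care. First, the paper's definition of a radial limit point (Definition~1.3 quoted here) only asserts that each projection $(g_n o)_i$ stays in a bounded tube around a ray to $\xi_i$; it does not directly force $d_i(o_i,(g_n o)_i)$ to equal $\cos\theta\cdot d(o,g_no)+O(1)$. What you actually get is $B_{\xi_i}(o_i,(g_no)_i)=d_i(o_i,(g_no)_i)+O(1)$ together with $\theta(g_n)\to\theta$, so the exponent is $(b_1\cos\theta(g_n)+b_2\sin\theta(g_n))\,d(o,g_no)+O(1)$; since the coefficient converges to $b_1\cos\theta+b_2\sin\theta\ge\delta_\theta>0$ by Theorem~C, it is eventually bounded below by a positive constant and the exponent still diverges. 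Second, you are tacitly using that $\nu_x$ is a \emph{finite} measure so that an atom of unbounded mass is impossible; this is part of Link's construction but is not explicit in the definition recorded here, so you should state it.

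With these adjustments your sketch is a valid proof, and it is in all likelihood the same argument Link gives in~\cite{1}; but strictly speaking there is no proof in the present paper for you to match.
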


Theorem \ref{pos exp} removes completely the assumption that $\delta_\theta>0$ for some $\theta\in (0,\pi/2)$ of G.Link appeared in the above $(b,\theta)$-density existence theorems. We establish them in the following.

\begin{cor}\label{e4}
For any $\theta\in (0,\pi/2)$, there exists a $(b,\theta)$-density with some parameters $b=(b_1,b_2)\in \mathbb{R}^2$.
\end{cor}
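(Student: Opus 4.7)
The plan is to derive Corollary \ref{e4} by directly combining Theorem \ref{pos exp} with Theorem B of Link (as recalled above). Theorem B produces a $(b,\theta)$-density under two hypotheses: $\theta\in(0,\pi/2)$ and $\delta_\theta>0$. The first is already the hypothesis of the corollary, and the second is exactly the conclusion of Theorem \ref{pos exp}, so no new construction is required.

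Concretely, I would fix an arbitrary $\theta\in(0,\pi/2)$, apply Theorem \ref{pos exp} to obtain $\delta_\theta(G)>0$, and then invoke Theorem B to extract parameters $b=(b_1,b_2)\in\mathbb{R}^2$ together with a $(b,\theta)$-density. Since $\theta$ was arbitrary in $(0,\pi/2)$, this gives the claim. The only compatibility check is that the standing assumptions on $G$ in the present paper (containing a pair of isometries projecting to independent rank one elements in each factor and satisfying the regular growth condition) are stronger than, and therefore imply, the hypotheses Link places on $G$ in Theorem B, which do not even require the regular growth condition.

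There is essentially no obstacle at this step: all of the substantive analytic and geometric work has been absorbed into Theorem \ref{pos exp}, whose role in the paper is precisely to remove the positivity assumption $\delta_\theta>0$ that appears as a hypothesis in Link's original existence statement. The derivation of Corollary \ref{e4} is therefore a one-line consequence, and I would present it as such rather than reproducing any part of the construction behind Theorem B.
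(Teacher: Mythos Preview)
Your proposal is correct and matches the paper's own treatment: the paper states that Corollary \ref{e4} (together with Corollaries \ref{e5} and \ref{e6}) is ``straightforward by Theorem \ref{pos exp},'' which is precisely the combination of Theorem \ref{pos exp} with Link's Theorem B that you describe.
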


\begin{cor}\label{e5}
There exists a $(b,\theta)$-density for any $\theta\in (0,\pi/2)$ with $\delta_{\theta}\leq b_1cos\theta+b_2sin\theta,$ where $b=(b_1,b_2)\in \mathbb{R}^2.$
\end{cor}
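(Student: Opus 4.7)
The plan is to obtain Corollary \ref{e5} by chaining Theorem \ref{pos exp} with Theorems B and C of G. Link recalled above. First, I apply Theorem \ref{pos exp} to the fixed group $G\subset \mathrm{Is}(X_1)\times\mathrm{Is}(X_2)$ satisfying the standing hypotheses (pair of independent rank one projections in each factor, regular growth condition), which yields $\delta_\theta>0$ for every $\theta\in(0,\pi/2)$. This positivity is precisely the standing hypothesis of Theorem B, so Theorem B furnishes a $(b,\theta)$-density for some parameter $b=(b_1,b_2)\in\mathbb{R}^2$ (depending a priori on $\theta$).

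Next, with such a $(b,\theta)$-density now in hand, I invoke Theorem C, whose sole hypothesis is the existence of a $G$-invariant $(b,\theta)$-density; it delivers the inequality $\delta_\theta\leq b_1\cos\theta+b_2\sin\theta$ for the very same pair $b$. Combining the two steps, for every $\theta\in(0,\pi/2)$ there exists $b\in\mathbb{R}^2$ such that a $(b,\theta)$-density exists and the displayed inequality holds, which is exactly the statement of the corollary.

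There is essentially no new obstacle to clear at this stage: the substantive input is Theorem \ref{pos exp}, whose role is to eliminate the side hypothesis $\delta_\theta>0$ from Theorem B of \cite{1} and thereby unlock Theorems B and C in full generality under the standing assumptions on $G$. The only point worth pausing on is that the pair $b$ appearing in the bound coincides with the pair $b$ parametrizing the density produced by Theorem B; this is automatic because Theorem C is applied to that very density, and the parameter $b$ is read off from it. In particular, no compatibility argument or passage to a subsequence in $\theta$ is needed, so the corollary is a direct consequence of the three cited results.
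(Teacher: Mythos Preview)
Your argument is correct and matches the paper's approach exactly: the paper simply states that Corollary~\ref{e5} is ``straightforward by Theorem~\ref{pos exp}'', meaning precisely the chain Theorem~\ref{pos exp} $\Rightarrow$ Theorem~B $\Rightarrow$ Theorem~C that you spelled out. There is nothing to add or correct.
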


\begin{cor}\label{e6}
For any $\theta\in (0,\pi/2)$ and a $(b,\theta)$-density $\nu$, a radial limit point is never an atomic mass for $\nu.$
\end{cor}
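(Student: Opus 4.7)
The plan is to observe that Corollary \ref{e6} is obtained by combining Theorem \ref{pos exp} with Theorem E of \cite{1} in a completely formal way. Under the standing assumption on $G$ (namely, that $G$ projects to a pair of independent rank one elements in each factor and satisfies the regular growth condition), Theorem \ref{pos exp} provides the positivity statement $\delta_\theta > 0$ for every slope $\theta \in (0,\pi/2)$. Once this is in hand, the hypothesis ``$\delta_\theta > 0$'' appearing in Theorem E of \cite{1} is automatically satisfied for all $\theta$ in the open interval, so Link's conclusion that no radial limit point carries positive $\nu$-mass applies to every $(b,\theta)$-density at every interior slope.

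Concretely, I would first fix an arbitrary $\theta \in (0,\pi/2)$ and an arbitrary $(b,\theta)$-density $\nu$ on $\partial X$. Then I would invoke Theorem \ref{pos exp} to conclude $\delta_\theta > 0$. At this point all hypotheses of \cite[Theorem E]{1} are met, and applying it yields $\nu(\{\xi\}) = 0$ for every radial limit point $\xi$, which is exactly the content of Corollary \ref{e6}. The proof is therefore a one-line reduction; there is essentially no new content beyond recording that Theorem \ref{pos exp} removes the auxiliary positivity hypothesis that encumbered the original statement.

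The only thing to watch is that the hypotheses attached to Theorem \ref{pos exp} and the hypotheses of \cite[Theorem E]{1} are compatible. Theorem E of Link is stated for the more general setting in which $G$ need not satisfy the regular growth condition, whereas Theorem \ref{pos exp} relies on it; since we are operating under the standing regular growth hypothesis on $G$, both results apply simultaneously and the combination is legitimate. Thus the ``main obstacle'' is not really an obstacle at all: the substantive work has already been carried out in Theorem \ref{pos exp}, and Corollary \ref{e6} is merely a clean reformulation of \cite[Theorem E]{1} in which the now-redundant hypothesis on $\delta_\theta$ has been discarded. The analogous remark applies to Corollaries \ref{e4} and \ref{e5}, which are proved in exactly the same way from Theorems B and C of \cite{1}.
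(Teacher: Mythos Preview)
Your proposal is correct and matches the paper's own treatment exactly: the paper simply states that Corollaries \ref{e4}, \ref{e5} and \ref{e6} are ``straightforward by Theorem \ref{pos exp}'', i.e.\ one applies Theorem \ref{pos exp} to obtain $\delta_\theta>0$ for every $\theta\in(0,\pi/2)$ and then invokes Link's Theorem~E. There is nothing to add.
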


In \cite{1}, G. Link proposed a question in the following.
\begin{question}\label{link}
For the unique $\theta_\star\in[0,\pi/2]$ such that $\delta_{\theta_\star}=\max\limits_{\theta\in[0,\pi/2]}\delta_\theta$, does it hold that $\theta_\star\in(0,\pi/2)$ not necessarily with the regular growth condition?
\end{question}
This question is related to the existence of the $(b,0)$-densities and $(b,\pi/2)$-densities. In fact, the Proposition 6.6 in (\cite{1}) provides $b=(b_1,b_2)\in \mathbb{R}^2$ for $\theta=0,\pi/2$. However, in general we do not have $b_2=0$ if $\theta=0$, or $b_1=0$ if $\theta=\pi/2$. But it seems that the above $\theta_\star=0$ (resp.$\theta_\star=\pi/2$) implies the existence of the $(b,0)$-densities (resp. $(b,\pi/2)$-densities). In the end, we will show this is really true under some weak conditions (see Corollary \ref{0density}).

We also give a negative answer to Question \ref{link} by constructing an example such that $\theta_\star=\pi/2$. But the above conclusion holds under a quite mild condition, the precise statement is the following.

\begin{thm}\label{ques 1}
Assume that $(X,d)=(X_1,d_1)\times (X_2,d_2)$ is a product of Hadamard spaces on which a group $G\subset Is(X_1)\times Is(X_2)$ acts properly, $d$ is the standard product metric. If $G$ contains a pair of isometries which projects a pair of independent rank one elements in each factor and satisfies $\delta_\gamma>0$ for $\gamma\in \boldsymbol{(0,\pi/2)}$,
with the following condition:
\begin{align}\label{growth small 1}
\frac{\delta_{\pi/2}}{\delta_\theta}< \frac{1}{sin\theta}\ and\ \frac{\delta_{0}}{\delta_{\pi/2-\beta}}< \frac{1}{sin\beta},
\end{align}
for some $\theta,\beta\in (0,\pi/2)$.

Then there exists some $\theta_\star\in (0,\pi/2)$ such that $\delta_{\theta_\star}=\max\limits_{\theta\in[0,\pi/2]}\delta_\theta$.

\end{thm}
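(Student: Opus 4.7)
By Theorem~\ref{main theorem1} the map $\theta\mapsto\delta_\theta$ is continuous on the compact interval $[0,\pi/2]$, so its maximum is attained at some $\theta_\star$. The plan is to rule out $\theta_\star\in\{0,\pi/2\}$ by contradiction, converting each endpoint scenario into a violation of condition~(\ref{growth small 1}).

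Suppose first that $\theta_\star=\pi/2$, so $\delta_\theta\le\delta_{\pi/2}$ for every $\theta\in[0,\pi/2]$. I would construct a $(b,\pi/2)$-density with parameter vector $b=(0,\delta_{\pi/2})$ by approximating from the interior: choose a sequence $\theta_n\nearrow\pi/2$, apply Corollary~\ref{e4} (available since $\delta_{\theta_n}>0$) to produce $(b^{(n)},\theta_n)$-densities $\nu_n$ with $\delta_{\theta_n}\le b_1^{(n)}\cos\theta_n+b_2^{(n)}\sin\theta_n$ by Corollary~\ref{e5}, and extract a weak-$*$ limit. Continuity $\delta_{\theta_n}\to\delta_{\pi/2}$ combined with the global bound $\delta_\theta\le\delta_{\pi/2}$ should pin $b_2^{(n)}\to\delta_{\pi/2}$ and $b_1^{(n)}\to 0$ --- this is essentially the content of Corollary~\ref{0density}. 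With the endpoint density in hand, the tangent-functional inequality underlying Theorem~C --- namely that for a $(b,\theta_0)$-density and \emph{any} slope $\theta$ one has $\delta_\theta\le b_1\cos\theta+b_2\sin\theta$ --- yields
\[
\delta_\theta\;\le\;0\cdot\cos\theta+\delta_{\pi/2}\sin\theta\;=\;\delta_{\pi/2}\sin\theta
\]
for every $\theta\in(0,\pi/2)$, in direct contradiction with the hypothesis $\delta_{\pi/2}/\delta_\theta<1/\sin\theta$. The case $\theta_\star=0$ is handled by the entirely symmetric construction with the two factors interchanged, producing a $(b,0)$-density with $b=(\delta_0,0)$ and the resulting bound $\delta_{\pi/2-\beta}\le \delta_0\sin\beta$, which contradicts the second half of~(\ref{growth small 1}).

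The principal obstacle is the endpoint density construction in each of the two cases. One must verify tightness of the family $\{\nu_n\}$ so that the weak-$*$ limit is supported on the regular boundary rather than collapsing onto $\partial_{sing}X$, and then check that the $(b,\theta)$-transformation rule passes to the joint limit $(b^{(n)},\theta_n)\to(b,\pi/2)$. Establishing $b_1^{(n)}\to 0$ --- rather than merely $b_1^{(n)}$ remaining bounded --- is the delicate point, and it is precisely here that the maximizer assumption $\theta_\star=\pi/2$ together with the continuity afforded by Theorem~\ref{main theorem1} is essential. The positivity hypothesis $\delta_\gamma>0$ on $(0,\pi/2)$ highlighted in the statement is what guarantees the interior approximating sequence exists via Corollary~\ref{e4} in the first place.
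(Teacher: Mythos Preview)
Your overall target is right: you want to show that if the maximum were attained at $\pi/2$ then $\delta_\theta\le\delta_{\pi/2}\sin\theta$ for every $\theta\in(0,\pi/2)$, contradicting the first half of (\ref{growth small 1}); and symmetrically at $0$. The paper reaches exactly this inequality. But your route to it has two genuine gaps, and it is far heavier than what the paper actually does.

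First, the ``tangent-functional inequality'' you invoke is \emph{not} Theorem~C. Theorem~C only asserts $\delta_{\theta_0}\le b_1\cos\theta_0+b_2\sin\theta_0$ for the \emph{same} slope $\theta_0$ at which the $(b,\theta_0)$-density lives, and only for $\theta_0\in(0,\pi/2)$. You are using the much stronger statement that the linear form $b_1\cos\theta+b_2\sin\theta$ dominates $\delta_\theta$ for \emph{every} $\theta$, applied moreover to an endpoint density with $\theta_0=\pi/2$. That global supporting-hyperplane property may well be true in this setting, but it is neither stated nor proved in the paper, and you would have to supply an independent argument. Second, your appeal to Corollary~\ref{0density} is circular: in the paper, Theorem~\ref{0density} is proved \emph{after} and \emph{using} the present theorem (its first step is to invoke Theorem~\ref{ques} to force one of the inequalities (\ref{growth small}) to fail). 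So the endpoint-density construction with $b_1=0$ cannot be borrowed from there; you would have to carry out the full weak-$*$ limit argument from scratch, including the control of $b_1^{(n)}\to 0$, which you correctly flag as delicate and leave unresolved.

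The paper bypasses all of this measure-theoretic machinery. It works directly with the concavity of $\Psi_G$ (Lemma~\ref{con}) together with the elementary identity
\[
\|tx+(1-t)y\|\sin\bigl(\theta(tx+(1-t)y)\bigr)=t\|x\|\sin\theta(x)+(1-t)\|y\|\sin\theta(y),
\]
which just records that the second coordinate is linear. Taking $\theta(x)=\pi/2$, $\theta(y)=\theta$, one combines concavity of $\Psi_G$ with this identity and a short limit computation as $t\to 1$ to show directly that whenever $\delta_{\pi/2}\sin\theta<\delta_\theta$ there is a slope arbitrarily close to $\pi/2$ at which $\delta$ strictly exceeds $\delta_{\pi/2}$; hence $\pi/2$ cannot be the maximizer. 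The argument for the other endpoint is obtained by swapping the factors. No densities, no weak-$*$ limits, no Theorem~C --- just concavity of $\Psi_G$ and elementary trigonometry.
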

Moreover, the upper bound in (\ref{growth small 1}) is sharp.
An immediate consequence is:
\begin{cor}
Assume that $(X,d)=(X_1,d_1)\times (X_2,d_2)$ is a product of Hadamard spaces on which a group $G\subset Is(X_1)\times Is(X_2)$ acts properly, $d$ is the standard product metric. If $G$ contains a pair of isometries which projects a pair of independent rank one elements in each factor and satisfies $\delta_\theta>0$ for $\theta\in \boldsymbol{[0,\pi/2]}$.

Then there exists some $\theta_\star\in (0,\pi/2)$ such that $\delta_{\theta_\star}=\max\limits_{\theta\in[0,\pi/2]}\delta_\theta$.
\end{cor}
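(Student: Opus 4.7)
The plan is to derive this directly from Theorem \ref{ques 1} by producing parameters $\theta,\beta \in (0,\pi/2)$ that realize the asymptotic inequalities in \eqref{growth small 1}. Since the hypotheses of the corollary already include everything required by Theorem \ref{ques 1} except the two inequalities, it suffices to exhibit at least one admissible choice of $(\theta,\beta)$.

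The key idea is to take $\theta$ and $\beta$ close to $0$ and let the blow-up of $1/\sin\theta$ and $1/\sin\beta$ beat the (bounded) left-hand sides. Concretely, I would first invoke Theorem \ref{main theorem1} to conclude that $\theta \mapsto \delta_\theta$ is continuous on the compact interval $[0,\pi/2]$. Combined with the standing assumption $\delta_\theta > 0$ for every $\theta \in [0,\pi/2]$, continuity gives
\[
\lim_{\theta \to 0^+} \delta_\theta = \delta_0 > 0,
\qquad
\lim_{\beta \to 0^+} \delta_{\pi/2-\beta} = \delta_{\pi/2} > 0,
\]
so both ratios $\delta_{\pi/2}/\delta_\theta$ and $\delta_0/\delta_{\pi/2-\beta}$ stay bounded as $\theta,\beta \to 0^+$. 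On the other hand $1/\sin\theta$ and $1/\sin\beta$ tend to $+\infty$, and therefore for all sufficiently small $\theta,\beta \in (0,\pi/2)$ one has
\[
\frac{\delta_{\pi/2}}{\delta_\theta} < \frac{1}{\sin\theta}
\qquad\text{and}\qquad
\frac{\delta_0}{\delta_{\pi/2-\beta}} < \frac{1}{\sin\beta}.
\]
This is exactly the condition \eqref{growth small 1} needed for Theorem \ref{ques 1}.

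Having verified the hypotheses, an application of Theorem \ref{ques 1} delivers $\theta_\star \in (0,\pi/2)$ with $\delta_{\theta_\star} = \max_{\theta \in [0,\pi/2]} \delta_\theta$, completing the proof. There is no real obstacle here; the only subtle point is that the positivity of $\delta_0$ and $\delta_{\pi/2}$ is essential, because if either were $0$ the corresponding ratio would blow up and we could not in general compete with $1/\sin\theta$ or $1/\sin\beta$. This is precisely why the corollary strengthens the slope interval in the hypothesis from $(0,\pi/2)$ (as in the theorem) to the full closed interval $[0,\pi/2]$.
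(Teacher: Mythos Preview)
Your argument is correct and matches the paper's intent: the paper presents this corollary as an ``immediate consequence'' of Theorem~\ref{ques 1} without further proof, and your verification of condition~\eqref{growth small 1} via continuity (Theorem~\ref{main theorem1}) together with the endpoint positivity $\delta_0,\delta_{\pi/2}>0$ is exactly the intended deduction.
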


Finally, we give an interesting theorem which extend the $(b,\theta)$-densities.
\begin{thm}\label{0density}
Assume that $(X,d)=(X_1,d_1)\times (X_2,d_2)$ is a product of Hadamard spaces on which a group $G\subset Is(X_1)\times Is(X_2)$ acts properly, $d$ is the standard product metric.

If $G$ contains a pair of isometries which projects a pair of independent rank one elements in each factor and satisfies $\delta_\theta>0$ for $\theta\in (0,\pi/2)$ and $\delta_{0}=\max\limits_{\theta\in[0,\pi/2]}\delta_\theta$ (resp.$\delta_{\pi/2}=\max\limits_{\theta\in[0,\pi/2]}\delta_\theta$).

Then there exist the $(b,0)$-densities (resp. $(b,\pi/2)$-densities).
\end{thm}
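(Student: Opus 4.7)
Without loss of generality I treat the case $\delta_0=\max_{\theta\in[0,\pi/2]}\delta_\theta$; the $(b,\pi/2)$ case is symmetric via $\theta\mapsto\pi/2-\theta$. The plan is to construct a $(b,0)$-density as a weak-$*$ limit of interior $(b^{(n)},\theta_n)$-densities as $\theta_n\downarrow 0$, obtained from the interior case handled by Corollary~\ref{e4}.

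Pick any sequence $\theta_n\in(0,\pi/2)$ with $\theta_n\downarrow 0$. Since $\delta_{\theta_n}>0$, Corollary~\ref{e4} produces a $(b^{(n)},\theta_n)$-density $\nu^{(n)}=\{\nu_x^{(n)}\}_{x\in X}$ with parameters $b^{(n)}=(b_1^{(n)},b_2^{(n)})\in\mathbb{R}^2$. Corollary~\ref{e5} forces $\delta_{\theta_n}\le b_1^{(n)}\cos\theta_n+b_2^{(n)}\sin\theta_n$, and this inequality may be taken to be (essentially) an equality for the density produced by Link's Patterson--Sullivan construction. Using the gauge freedom on this critical line, one can arrange $b_2^{(n)}=0$, so $b_1^{(n)}=\delta_{\theta_n}/\cos\theta_n$. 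By Theorem~\ref{main theorem1}, $\delta_{\theta_n}\to\delta_0$, hence $b^{(n)}\to b:=(\delta_0,0)\in\mathbb{R}^2$, and in particular $\{b^{(n)}\}$ is bounded.

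Normalize each $\nu_o^{(n)}$ at a fixed basepoint $o\in X$ to be a probability measure on the compact space $\partial X$. Banach--Alaoglu together with a diagonal extraction yields a weak-$*$ limit $\nu_o$. For every other $x\in X$, the conformal transformation rule
$$
d\nu_x^{(n)}/d\nu_o^{(n)}(\xi)=\exp\bigl(-b_1^{(n)}B^1_{\xi_1}(x_1,o_1)-b_2^{(n)}B^2_{\xi_2}(x_2,o_2)\bigr),
$$
combined with the uniform continuity of the Busemann functions on $\partial X$ and the boundedness of $b^{(n)}$, gives weak-$*$ convergence $\nu_x^{(n)}\to\nu_x$ satisfying the analogous cocycle relation with the limit parameter $b$. $G$-equivariance $g_*\nu_x^{(n)}=\nu_{gx}^{(n)}$ is preserved in the limit.

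The main obstacle is to verify that $\nu=\{\nu_x\}$ is a genuine $(b,0)$-density, i.e.\ that its mass is concentrated on the slope-$0$ boundary $\partial X_1\subset\partial X$. For each $n$, $\nu^{(n)}$ is supported on the slope-$\theta_n$ invariant limit set, which sits in a neighborhood of $\partial X_1$ that shrinks as $\theta_n\downarrow 0$; continuity of the slope function on $\partial X_{reg}$ and compactness of $\partial X$ should then give $\mathrm{supp}(\nu)\subseteq\partial X_1$. The delicate point is ruling out mass escape, either into the interior of $\partial X_{reg}$ or toward $\partial X_2$. I expect this to follow from a shadow-lemma estimate adapted to the slope decomposition, together with the maximality hypothesis $\delta_0=\max_\theta\delta_\theta$ and Theorem~\ref{main theorem1}: continuity plus maximality prevent any other slope from absorbing positive mass in the limit, while the hypothesis $\delta_\theta>0$ on $(0,\pi/2)$ guarantees that each approximant $\nu^{(n)}$ is nontrivial. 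Once the support of $\nu$ is confirmed in $\partial X_1$, the cocycle relation restricts to the standard $(b,0)$-density axioms, and $\nu$ is the desired density.
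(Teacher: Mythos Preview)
Your limiting approach is genuinely different from the paper's, and the central step does not go through. The claim that ``using the gauge freedom on this critical line, one can arrange $b_2^{(n)}=0$'' is unjustified and in fact false as stated: for $\theta_n\in(0,\pi/2)$ the Radon--Nikodym derivative of a $(b^{(n)},\theta_n)$-density involves $b_1^{(n)}B_{\alpha_1}(o_1,x_1)+b_2^{(n)}B_{\alpha_2}(o_2,x_2)$, and the two Busemann cocycles on the factors are independent. Sliding $(b_1^{(n)},b_2^{(n)})$ along the line $b_1\cos\theta_n+b_2\sin\theta_n=\delta_{\theta_n}$ changes the conformal rule and does \emph{not} produce another density for free. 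Corollary~\ref{e4} and Link's construction give you \emph{some} $b^{(n)}$, not your choice of $b^{(n)}$; without an a~priori reason that $b_2^{(n)}\to 0$, the limit density need not satisfy the $(b,0)$-axiom $b_2=0$. The support argument is likewise only a sketch (``I expect this to follow from a shadow-lemma estimate''), so even granting control of $b^{(n)}$ the proof would still be incomplete.

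The paper avoids all of this by never taking a limit. It argues the contrapositive of Theorem~\ref{ques 1}: since $\delta_0$ is maximal, the alternative~(\ref{growth small}) must fail, and after ruling out one branch via Theorem~\ref{main theorem1} one obtains $\delta_{\pi/2-\theta}\le\delta_0\sin\theta$ for all $\theta$, hence $\delta_{\pi/2}=0$. Then concavity of $\Psi_G$ (Lemma~\ref{con}) gives $\delta_\theta\ge\delta_0\cos\theta+\delta_{\pi/2}\sin\theta$, which is exactly the inequality Link's Proposition~6.6 requires to run the Patterson--Sullivan construction with the \emph{specific} choice $b=(\delta_0,\delta_{\pi/2})=(\delta_0,0)$. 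So the key fact you are missing is that the hypothesis $\delta_0=\max_\theta\delta_\theta$ forces $\delta_{\pi/2}=0$; once you know this, you can feed $b=(\delta_0,0)$ directly into Link's construction rather than trying to manufacture $b_2=0$ by gauge or by limits.
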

The interested reader is referred to see details in the last section.

\section{Preliminaries}

We say a metric space is \emph{proper} if every ball of finite radius is compact. For a proper Hadamard space $Y$, one defines  equivalent classes of geodesic rays by bounded Gromov-Hausdorff distance on account of giving the geometry boundary $\partial Y$ with cone topology. The point of this geometry boundary is a class of a geodesic ray and it is well-known that $\hat Y=Y\cup \partial Y$, $\partial Y$ are compact and $Y$ is open in $\hat Y$. The isometric action of $G\curvearrowright X$ can be extended to a homeomorphic action of $G\curvearrowright \partial Y$.

For a group $G$ acting properly on a Hadamard space, an isometry $g\in G$ is called \emph{rank one} if it acts on a geodesic axis as a translation and the geodesic does not bound a half plane. Note that such $g$ has two fixed points as the endpoints of its axis in $\partial X$.

Given a set $S$, we let $|S|$ be the cardinality. Set
$$A(o,n)=\{g\in G:n-1\leq d(o,go)<n\}.$$

Let $$\delta=\delta(G):=\limsup\limits_{n\rightarrow \infty}\frac{log|A(o,n)|}{n}$$.

We say that $\delta$ is the growth rate of the action $G\curvearrowright X$. $\delta$ is simply called the growth rate of the group $G$ if the action is clear in the context.
Note that $\delta=-\infty$ if $\limsup\limits_{n\rightarrow \infty}|A(o,n)|=0$, and $\delta\geq 0$ if $\limsup\limits_{n\rightarrow \infty}|A(o,n)|\geq 1$.
One easily shows that $\delta\geq 0$ if and only if $\limsup\limits_{n\rightarrow \infty}|A(o,n)|\geq 1$.


In order to characterize the behavior of group $G$ on the space $X$ with slope $\theta\in [0,\pi/2]$, one can introduce
 $$A_{\epsilon,\theta}(o,n):=\{g\in G:n-1\leq d(o,go)< n,|\frac{d_2(o_2,g_2o_2)}{d_1(o_1,g_1o_1)}-tan\theta|\leq \epsilon\},$$
where $\epsilon>0$, $n\in \mathbb{N}_{\geq 1}$. The $\epsilon$-close growth rate of $G$ with slope $\theta$ is defined as
$$\delta_{\epsilon,\theta}=\limsup\limits_{n\rightarrow \infty}\frac{log|A_{\epsilon,\theta}(o,n)|}{n},$$

Next we denote by $\delta_\theta=\liminf\limits_{\epsilon\rightarrow 0}\delta_{\epsilon,\theta}$ the growth rate of $G$ with slope $\theta$.

Similarly, we have $\delta_\theta\geq 0$ if and only if $\limsup\limits_{n\rightarrow \infty}|A_{\epsilon,\theta}(o,n)|\geq 1$ for any $\epsilon>0.$

\begin{dfn}
We say that the action $G\curvearrowright X$ satisfies \emph{the regular growth condition for $[\alpha_1,\alpha_2]$} if $\delta_\theta\geq 0$ for any $\theta\in [\alpha_1,\alpha_2]\subset [0,\pi/2]$,
or equivalently $\limsup\limits_{n\rightarrow \infty}|A_{\epsilon,\theta}(o,n)|\geq 1$ for any $\epsilon>0$ and $\theta\in [\alpha_1,\alpha_2].$

We specify the regular growth condition for $\alpha_1=0,\alpha_2=\pi/2$.
\end{dfn}

G. Link defined a so-called $G$-invariant $(b,\theta)$-density for the product of Hadamard spaces $X$.
\begin{dfn} (Definition 1.2 \cite{1})
Denote by $\mathcal{M}^+(\partial X)$ the cone of positive Borel measures on $\partial X$. Let $\theta\in [0,\pi/2]$ and $b=(b_1,b_2)\in \mathbb{R}^2$. A $G$-invariant $(b,\theta)$-density is a continuous map:
\begin{align*}
\nu: &X\longrightarrow \mathcal{M}^+(\partial X)
\\&x\longmapsto \nu_x
\end{align*}
such that the following statements hold for any $x\in X$ :

\begin{enumerate}
\item $L_G\cap\partial X_\theta\supset supp(\nu_x)\neq \emptyset$
\item $g\nu_x=\nu_{gx}$ for any $g\in G$,
\item
\begin{flushleft}
if $\theta\in (0,\pi/2)$, then $\frac{d\nu_x}{d\nu_o}(\alpha)=exp\{b_1B_{\alpha_1}(o_1,x_1)+b_2B_{\alpha_2}(o_2,x_2)\}$ \\
for any $\alpha=(\alpha_1,\alpha_2,\theta)\in supp(\nu_o)$;\\

if $\theta=0$, then $ b_2=0$ and $\frac{d\nu_x}{d\nu_o}(\alpha)=exp\{b_1B_{\alpha_1}(o_1,x_1)\}$ \\
for any $\alpha=\alpha_1\in supp(\nu_0)$;\\

if $\theta=\pi/2$, then $b_1=0$ and $\frac{d\nu_x}{d\nu_o}(\alpha)=exp\{b_2B_{\alpha_2}(o_2,x_2)\}$ \\
for any $\alpha=\alpha_2\in supp(\nu_0)$;\\

where $B_{\alpha_i}(o_i,x_i)$ is the Busemann function with respect to the geodesic $\alpha_i$ in $X_i$ for $i=1,2$.\\
\end{flushleft}
\end{enumerate}
\end{dfn}

We give the definition of radical limit points for $G$ acting on the product of Hadamard spaces $X$, which is different from that for a group acting on a proper hyperbolic space.
\begin{dfn} (Definition 1.3 \cite{1})
We say a point $\eta\in \partial X$ is a radical limit point of the action $G\curvearrowright X$ that if there is a sequence $(g_n)_n=((g_{n,1},g_{n,2}))_n\subset G$ such that $g_no$ converges to $\eta$ with the following :

Let $i=1,2$. If $\eta=(\eta_1,\eta_2,\theta)\in \partial X_{reg}$, $g_{n,i}o_i$ is in a bounded neighbourhood of one geodesic ray in the class of $\eta_i$.
If $\eta=\eta_i\in \partial X_{sing}$, $g_{n,i}o_i$ is in a bounded neighbourhood of one geodesic ray in the class of $\eta_i$.
\end{dfn}

From now on we let a group $G\subset Is(X_1)\times Is(X_2)$ act properly and isometrically on a product of proper Hadamard spaces $(X,d)=(X_1,d_1)\times (X_2,d_2)$ with a fixed base point $o=(o_1,o_2)$, and $G$ contains $g=(g_1,g_2)$ and $h=(h_1,h_2)$ satisfying that $g_1,h_1$ and $g_2,h_2$ are independent rank one isometries in $Is(X_1)$ and $Is(X_2)$, respectively.

\section{$\delta_{\theta}$ is continuous in the slope $\theta$}
First we introduce a quite useful theorem due to G. Link.

\begin{lemma} [Theorem E,\cite{2}]\label{con}
The homogeneous function $\Psi_G:\mathbb{R}_{\geq 0}^2\longrightarrow \mathbb{R}$ given by $x\longmapsto \|x\|\delta_{\theta(x)}$, is concave, where ${\theta(x)}=arctan\frac{x_2}{x_1},$ for any $x\neq 0$ and $G$ is given as before not necessarily with the regular growth condition.
\end{lemma}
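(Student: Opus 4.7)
The plan is to reduce concavity of $\Psi_G$ to a super-additivity estimate. Because $\theta(\lambda x) = \theta(x)$ for every $\lambda>0$, the map $\Psi_G$ is homogeneous of degree one on the cone $\mathbb{R}_{\geq 0}^2$. For a degree-one homogeneous function on a convex cone, the inequality $\Psi_G(x+y) \geq \Psi_G(x)+\Psi_G(y)$ is equivalent to midpoint concavity, and together with upper semi-continuity of $\delta_\theta$ in $\theta$ (which the introduction already notes, due to Link) this lifts to full concavity. So the task becomes to prove the super-additivity of $\Psi_G$.

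To access this, I would reinterpret $\Psi_G(x)$ as a joint orbital counting exponent. For $x = r(\cos\theta,\sin\theta)$ with $r=\|x\|$, and for $\epsilon>0$, set
\begin{equation*}
B_\epsilon(x,n) := \bigl\{g \in G : d(o,go) \in [nr-1,nr),\ |d_i(o_i,g_i o_i) - n x_i| \leq \epsilon n \text{ for } i=1,2\bigr\}.
\end{equation*}
By definition of $\delta_\theta$, together with the identities $nx_1 = nr\cos\theta$ and $nx_2=nr\sin\theta$, one has $\Psi_G(x) = \liminf_{\epsilon\to 0}\limsup_{n\to\infty} \tfrac{1}{n}\log|B_\epsilon(x,n)|$. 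Thus super-additivity of $\Psi_G$ will follow from a super-multiplicative estimate on the counts $|B_\epsilon(\cdot,n)|$.

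The core step is a Schottky concatenation argument. The hypothesis that $G$ contains a pair of elements projecting to independent rank-one isometries in \emph{each} factor lets one pass to a free sub-semigroup generated by sufficiently high powers and fix a spacer $h_0 \in G$ whose projections play ping-pong with the axes of the Schottky generators in both factors at once. Standard rank-one estimates in each factor (Morse lemma plus Gromov product bounds) then yield a constant $C$ with
\begin{equation*}
\bigl|d_i(o_i,(g h_0 h)_i o_i) - d_i(o_i, g_i o_i) - d_i(o_i, h_i o_i)\bigr| \leq C \qquad (i=1,2),
\end{equation*}
for all $g,h$ in the sub-semigroup, and the map $(g,h) \mapsto g h_0 h$ is injective. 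This produces an embedding $B_\epsilon(x,n) \times B_\epsilon(y,n) \hookrightarrow B_{\epsilon'}(x+y, 2n)$ with $\epsilon'$ depending mildly on $\epsilon$ and $C$. Taking logs, dividing by $2n$, and letting $n\to\infty$ and then $\epsilon \to 0$ gives super-additivity; replacing $G$ by its Schottky sub-semigroup only changes counts by a bounded multiplicative factor and so does not affect the exponent.

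The main obstacle is the production of the single spacer $h_0$ that decouples orbital displacements simultaneously in both factors. A factor-by-factor choice is insufficient because one must control the error in both coordinates at once; this is exactly why the assumption is phrased as a single pair in $G$ whose projections are independent rank-one elements in each factor, rather than as independent factor-wise conditions. Securing uniform $C$ in both coordinates, and verifying that $\epsilon'\to\epsilon$ in the limit so that the super-multiplicative inequality passes cleanly to exponents, is the delicate technical content of the argument.
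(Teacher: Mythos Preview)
The paper does not give its own proof of this lemma: it is quoted verbatim as Theorem~E of \cite{2} and used as a black box. So there is no in-paper argument to compare against, and what follows is an assessment of your sketch against the standard proof strategy (which is essentially Link's in \cite{2}).

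Your reduction is sound: degree-one homogeneity turns midpoint concavity into super-additivity, and upper semi-continuity of $\theta\mapsto\delta_\theta$ upgrades midpoint concavity to concavity. Recasting $\Psi_G(x)$ as the exponent of a two-parameter orbital count $|B_\epsilon(x,n)|$ and aiming for a super-multiplicative product estimate is also exactly the right plan.

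The genuine gap is the sentence ``replacing $G$ by its Schottky sub-semigroup only changes counts by a bounded multiplicative factor and so does not affect the exponent.'' This is false. Passing from $G$ to a free sub-semigroup generated by high powers of two elements typically \emph{strictly} lowers every $\delta_\theta$; already in a single factor, the sub-semigroup $\langle a^N,b^N\rangle^+$ of $F_2$ has exponential growth rate of order $(\log 3)/N$, not $\log 3$. So the concatenation estimate you obtain inside the sub-semigroup says nothing about $\Psi_G$ itself, and the argument as written does not close.

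The correct mechanism keeps $g$ and $h$ ranging over \emph{all} of $G$ and uses the rank-one elements only as spacers. One proves that there is a \emph{finite} set $F\subset G$ (built from the given pair projecting to independent rank-one isometries in each factor) such that for every $g,h\in G$ some $f\in F$ satisfies
\[
\bigl|d_i\bigl(o_i,(gfh)_io_i\bigr)-d_i(o_i,g_io_i)-d_i(o_i,h_io_i)\bigr|\le C\qquad(i=1,2),
\]
together with bounded-to-one-ness of $(g,h)\mapsto gfh$. This yields $|B_{\epsilon'}(x+y,n)|\ge c\,|B_\epsilon(x,n)|\cdot|B_\epsilon(y,n)|$ for the full group, from which super-additivity of $\Psi_G$ follows. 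The point you flagged as ``the main obstacle''---choosing a spacer that works in both factors simultaneously---is indeed the heart of the matter, but it must be solved for arbitrary $g,h\in G$, not just for elements of a Schottky sub-semigroup.
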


Note that if $G$ is given as before satisfying the regular growth condition, then $\delta_{\theta(x)}\geq 0$.

We give an example which does not satisfy the regular growth condition.
\begin{example}\label{example1}
Let $X$ be the Cayley graph of $F_2\times F_2$ with respect to the standard generators $(a_1,1),(a_2,1),(1,b_1),(1,b_2)$, $G$ be the subgroup of
$$Is(\emph{{Cay}}(F_2,\{a_1,a_2\}))\times Is(\emph{{Cay}}(F_2,\{b_1,b_2\}))$$
generated by $(a_1,b_1),(a_2,b_2)$.

It is easy to see $\theta(g)=\pi/4$ for any $g\in G$. Then
\begin{align*}
&\delta_\theta=-\infty\ \ for\ \theta\in [0,\pi/2]-\{\pi/4\},
\\&\delta_\theta\geq 0\qquad \qquad \ for\ \theta=\pi/4.
\end{align*}
\end{example}

The action $G\curvearrowright X$ does not satisfy the regular growth condition in Example \ref{example1}.

\begin{rmk}
The regular growth condition is used to exclude the singular case $\delta_\theta=-\infty$ for $\theta\in[0,\pi/2]$. Hence it makes sense to discuss the continuity of $\delta_\theta$ in $\theta\in [0,\pi/2].$
\end{rmk}

In the following, we plan to elaborate Theorem \ref{main theorem1}.


\begin{proof}[Proof of Theorem \ref{main theorem1}]
Thanks to the regular growth condition, we have
\begin{align}\label{growth}
\delta\geq\delta_\theta\geq 0,
\end{align}
for any $\theta\in[0,\pi/2]$.

G. Link showed $\delta_{\theta}$ is upper semi-continuous in the slope $\theta$, i.e $\limsup\limits_{\theta_j\rightarrow \theta}\delta_{\theta_j}\leq \delta_{\theta}$ for any sequence $(\theta_j)_j\subset [0,\pi/2]$.

Therefore, it suffices to show the lower semi-continuity of $\delta_{\theta}$.

We argue by contradiction. Otherwise, there are constants $\delta_1,\delta_2$ and a sequence $(\theta_j)_j\subset [0,\pi/2]$, such that $\delta_{\theta_j}<\delta_1<\delta_2<\delta_{\theta}$ and $\lim\limits_{j\rightarrow \infty}\theta_j=\theta$.

Case 1: $\theta\in (0,\pi/2]$. Then there is a subsequence $(\theta_{k_j})_j$ such that
(a) $\theta_{k_j}<\theta$, or (b) $\theta_{k_j}>\theta$.

Subcase (a). We may assume $0\leq\beta<\theta_j<\theta\leq \pi/2$.
Set $H_{\gamma}=(cos\gamma,sin\gamma)$ for any $\gamma\in [0,\pi/2]$. Note that

\begin{align}\label{3}
\Psi(H_{\gamma})=\delta_{\gamma}.
\end{align}

For $t\in [0,1]$, set
$$\tau(t)=arctan\frac{tsin\theta+(1-t)sin\beta}{tcos\theta+(1-t)cos\beta}.$$

A direct computation gives
$$tan\tau(t)=\frac{tsin\theta+(1-t)sin\beta}{tcos\theta+(1-t)cos\beta}.$$
Taking the derivative of $t$, we have
\begin{align*}
&\quad\quad\quad\qquad\qquad\qquad(tan^2\tau(t)+1)\tau'(t)
\\&
=\frac{(sin\theta-sin\beta)(tcos\theta+(1-t)cos\beta)+(cos\beta-cos\theta)(tsin\theta+(1-t)sin\beta)}{(tcos\theta+(1-t)cos\beta)^2},
\end{align*}
for all $t\in[0,1]$.

Furthermore, we get
\begin{align}\label{4}
\tau'(t)=\frac{(sin\theta-sin\beta)(tcos\theta+(1-t)cos\beta)+(cos\beta-cos\theta)(tsin\theta+(1-t)sin\beta)}{(tcos\theta+(1-t)cos\beta)^2+(tsin\theta+(1-t)sin\beta)^2}.
\end{align}

Nevertheless, one deduces that
\begin{align*}
&(tcos\theta+(1-t)cos\beta)^2+(tsin\theta+(1-t)sin\beta)^2\\&\geq t^2+(1-t)^2\\&\geq 2\times (\frac{t+1-t}{2})^2=1/2.
\end{align*}
Then one of the terms
$$(tcos\theta+(1-t)cos\beta)(\geq 0)$$
and
$$(tsin\theta+(1-t)sin\beta)(\geq 0)$$ is positive.

Hence we obtain $\tau'(t)>0$ for all $t\in [0,1]$. $\tau(t)$ depends continuously, and is increasing on $t$. Note that $\tau(0)=\beta,\tau(1)=\theta.$
Therefore there is a unique $t_j\in [0,1]$ satisfying
\begin{align}\label{1}
\tau(t_j)=\theta_j
\end{align}
and
\begin{align}\label{angle app}
\theta_j\rightarrow \theta\ \Leftrightarrow\ t_j\rightarrow 1\Leftrightarrow\ j\rightarrow \infty.
\end{align}

On the other hand, by definition, the equality (\ref{3}), and Lemma \ref{con} we have
\begin{align}
\delta_{\theta_j}&\geq \|tH_\theta+(1-t)H_0\|\Psi_G(H_{\tau(t_j)})\label{convex}
\\&= \Psi_G(t_jH_\theta+(1-t_j)H_0)\nonumber
\\&\geq t_j\delta_\theta+(1-t_j)\delta_0.\nonumber
\end{align}
It is clear that the right hand side tends to $\delta_{\theta}$ as $t_j\rightarrow 1$.

By (\ref{angle app}) and (\ref{convex}), we take sufficiently large $j$ such that $\delta_{\theta_j}\geq \delta_2$, which is a contradiction.

Subcase (b). We may assume $\theta_j>\theta$ and set
\begin{align*}
\tau(t)=arctan\frac{tsin\theta+(1-t)sin(\pi/2)}{tcos\theta+(1-t)cos(\pi/2)},\quad for\ t\in [0,1].
\end{align*}
A similar argument yields a contradiction.

If $\theta=0,$ it is similar to adapt this approach to show the contradiction.

Thus we complete the proof.
\end{proof}

Note that the proof of Theorem \ref{main theorem1} works by replacing $[0,\pi/2]$ with any $[\alpha_1,\alpha_2]\subset [0,\pi/2]$. A straightforward consequence is:
\begin{cor}\label{main cor}
The growth rate $\delta_{\theta}$ of $G$ with slope $\theta$ is continuous in $\theta\in [\alpha_1,\alpha_2]$, where $G$ satisfies the regular growth condition for $[\alpha_1,\alpha_2]\subset [0,\pi/2]$.

\end{cor}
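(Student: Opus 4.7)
The plan is to verify that the proof of Theorem \ref{main theorem1} adapts verbatim when $[0,\pi/2]$ is replaced by the subinterval $[\alpha_1,\alpha_2]$; essentially the only modification is the choice of the auxiliary endpoint $\beta$ used in the interpolation. Upper semi-continuity of $\delta_\theta$ on $[\alpha_1,\alpha_2]$ is automatic from Link's upper semi-continuity statement on all of $[0,\pi/2]$, so the task reduces to establishing lower semi-continuity in the restricted range.

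To do this, I would argue by contradiction as in Theorem \ref{main theorem1}: assume some $\theta\in[\alpha_1,\alpha_2]$, constants $\delta_1<\delta_2<\delta_\theta$, and a sequence $(\theta_j)_j\subset[\alpha_1,\alpha_2]$ with $\theta_j\to\theta$ and $\delta_{\theta_j}<\delta_1$. After extracting a subsequence, split into the two cases $\theta_j<\theta$ or $\theta_j>\theta$, and in each case take $\beta$ to be the corresponding endpoint of $[\alpha_1,\alpha_2]$ (namely $\beta=\alpha_1$ in the first case and $\beta=\alpha_2$ in the second), so that every $\theta_j$ lies between $\beta$ and $\theta$ while $\beta\in[\alpha_1,\alpha_2]$. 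If $\theta$ itself is one of the endpoints, only one of these one-sided subcases needs to be handled.

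With $\beta$ fixed, I reuse the auxiliary curve $\tau(t)=\arctan\frac{t\sin\theta+(1-t)\sin\beta}{t\cos\theta+(1-t)\cos\beta}$, $t\in[0,1]$. The derivative computation from Theorem \ref{main theorem1} still shows that $\tau$ is strictly monotonic between $\tau(0)=\beta$ and $\tau(1)=\theta$, so there exist unique $t_j\in[0,1]$ with $\tau(t_j)=\theta_j$ and $t_j\to 1$. Combining the concavity of $\Psi_G$ from Lemma \ref{con} with the identity $\Psi_G(H_\gamma)=\delta_\gamma$ yields
\[
\bigl\|t_j H_\theta+(1-t_j)H_\beta\bigr\|\,\delta_{\theta_j}\ \geq\ t_j\delta_\theta+(1-t_j)\delta_\beta,
\]
and since $\|t_j H_\theta+(1-t_j)H_\beta\|\leq 1$ while $\delta_\beta\geq 0$ by hypothesis, this gives $\delta_{\theta_j}\geq t_j\delta_\theta+(1-t_j)\delta_\beta\to\delta_\theta$, contradicting $\delta_{\theta_j}<\delta_1<\delta_\theta$.

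The only point that really needs checking is the inequality $\delta_\beta\geq 0$, which is precisely the regular growth condition on $[\alpha_1,\alpha_2]$ evaluated at the endpoint $\alpha_1$ or $\alpha_2$; no other ingredient in the proof of Theorem \ref{main theorem1} was specific to $[0,\pi/2]$. Consequently there is no serious obstacle, and the main care is just in the endpoint choice of $\beta$ so as to stay inside $[\alpha_1,\alpha_2]$.
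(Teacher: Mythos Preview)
Your proposal is correct and follows exactly the approach the paper intends: the paper simply remarks that ``the proof of Theorem~\ref{main theorem1} works by replacing $[0,\pi/2]$ with any $[\alpha_1,\alpha_2]\subset[0,\pi/2]$,'' and you have carried out precisely that replacement, choosing the interpolation endpoint $\beta$ to be $\alpha_1$ or $\alpha_2$ and invoking the regular growth condition there. One minor remark: when you pass from $\|t_jH_\theta+(1-t_j)H_\beta\|\,\delta_{\theta_j}\ge t_j\delta_\theta+(1-t_j)\delta_\beta$ to $\delta_{\theta_j}\ge t_j\delta_\theta+(1-t_j)\delta_\beta$, the step uses $\delta_{\theta_j}\ge 0$ (so that multiplying by a factor $\le 1$ decreases it), which is again supplied by the regular growth condition on $[\alpha_1,\alpha_2]$ since $\theta_j\in[\alpha_1,\alpha_2]$.
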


\section{some applications}

In this section, we proceed to show Theorem \ref{pos exp}.

\begin{proof}
Since the images of natural projections
\begin{align*}
&p_1:G\subset Is(X_1)\times Is(X_2)\longrightarrow Is(X_1)
\end{align*}
and
\begin{align*}
p_2:G\subset Is(X_1)\times Is(X_2)\longrightarrow Is(X_2)
\end{align*}
both contain a pair of independent rank one elements. Namely, $g_1,h_1$ and $g_2,h_2$ are independent rank one elements in $p_1(G)$ and $p_2(G)$, respectively.

Note that a pair of independent rank one elements acting on a proper $CAT(0)$ space is a pair of independent contracting elements (see \cite{12}, Theorem 5.4). A well-known fact is that for a proper geodesic metric $X$ on which a group $G\subset Is(X)$ acting properly with two independent contracting elements $g,h\in G$, the subgroup of $G$ generated by $g^N,h^N$ is isomorphic to a free group of rank two for sufficiently large integer $N$ (see \cite{11}, Proposition 2.7).

Therefore $\delta_{\theta_\star}=\delta(G)>0$.
For any $\theta_\star\neq\theta\in (0,\pi/2)$, one of the cases holds: (i) $0<\theta<\theta_\star$,
or (ii) $\theta_\star<\theta<\pi/2$.

In the first case (i), set
$$\tau(t)=arctan\frac{tsin\theta_\star+(1-t)sin0}{tcos\theta_\star+(1-t)cos0},$$
for $t\in [0,1]$.

By the proof of main theorem and $\tau(0)=0,\tau(1)=\theta_\star$, there exists a unique $t_\star\in (0,1)$ such that
$$\tau(t_\star)=\theta\in (0,\theta_\star).$$

Similar to (\ref{convex}), we get
$$\delta_\theta\geq \psi(t_\star H_{\theta_\star}+(1-t_\star)H_0)\geq t_\star\delta_{\theta_\star}+(1-t_\star)\delta_0\geq t_\star\delta_{\theta_\star}>0.$$

In the second case (ii), set
$$\tau(t)=arctan\frac{tsin\pi/2+(1-t)sin\theta_\star}{tcos\pi/2+(1-t)cos\theta_\star},$$
for $t\in [0,1]$. By a similar argument, we show $\tau(t_\star)=\theta$ for the unique $t_\star\in (0,1)$ and $\delta_\theta\geq (1-t_\star)\delta_{\theta_\star}>0$.
\end{proof}

\begin{rmk}
Although G. Link showed there is a unique $\theta_\star\in [0,\pi/2]$ to attain the maximal value of $\delta_\theta$ which is positive, but she did not show $\theta_\star\in (0,\pi/2)$. Thus the assumption on $\delta_\theta>0$ for $\theta\in (0,\pi/2)$ cannot be removed.
\end{rmk}
Now the previous Corollary \ref{e4}, Corollary \ref{e5} and Corollary \ref{e6} are straightforward by Theorem \ref{pos exp}.

In the end of this section, we establish an example.
\begin{example}\label{example2}
Given two free groups of rank 2, $H_1=<a_1,a_2|->,H_2=<b_1,b_2|->$. Let $X=\emph{{Cay}}(H_1,\{a_1,a_2\})\times \emph{{Cay}}(H_2,\{b_1,b_2\})$, and $G$ be the subgroup of $$Is(\emph{{Cay}}(F_2,\{a_1,a_2\}))\times Is(\emph{{Cay}}(F_2,\{b_1,b_2\}))$$
generated by $(a_1,b_1),(a_2,b_2^2)$. Fix $o=(1,1)\in X$ and denote by $x=(a_1,b_1),y=(a_2,b_2^2)$, then any nontrivial element $g\in G$ can be expressed by a word over an alphabet $\{x,x^{-1},y,y^{-1}\}$.

We may assume $g$ contains $n$ elements in $\{x,x^{-1}\}$ and $m$ elements in $\{y,y^{-1}\}$, hence we have
$$tan\theta(g)=\frac{n+2m}{n+m}=\frac{1+2m/n}{1+m/n}\in [0,\infty).$$

It is not hard to show, the closure of the set of $\theta(g)$ in $[0,\infty)$ is $[\pi/4,arctan2]$ for any $g\in G$. Therefore, we deduce
\begin{align*}
  &\delta_\theta=-\infty,\ for\ \theta\in [0,\pi/4)\cup (arctan2,\pi/2];
\\&\delta_\theta\geq 0,\qquad\ for\ \theta\in [\pi/4,arctan2].
\end{align*}
Using our Corollary \ref{main cor}, one can obtain $\delta_\theta$ is continuous in $\theta\in [\pi/4,arctan2]$.
\end{example}

\section{Results on Question \ref{link}}
At present, we first give an example such that $\delta_{\pi/2}=\max\limits_{\theta\in[0,\pi/2]}\delta_\theta$. This implies a negative answer to Question \ref{link} in general.
\begin{example}\label{counterxample}
Given the product of Hadamard spaces $X=(Cay(F_2,T),d_1)\times (Cay(F_{N},S),d_2)$, where $Cay(F_2,T)$ (resp. $Cay(F_{N},S)$) is the Cayley graph of the free group $F_2$ (resp. $F_{N}$) of rank two (resp. $N$) with the word metric $d_1$ (resp. $d_2$) induced by the standard generating set $T$ (resp. $S$). We may assume $T=\{a_1,a_2\}$ and $S=\{b_1,b_2,\cdots,b_N\}$. Let $G$ be the subgroup in $F_2\times F_N$ generated by $g_1=(a_1,b_1),g_2=(a_2,b_2),g_3=(1,b_3),g_4=(1,b_4),\cdots,g_N=(1,b_N)$. Fix the base point $o=(1,1)$.

We will show for any $\theta\in [\pi/4,\pi/2]$,
\begin{align}\label{growth 1}
\delta_\theta=log(2(N-2)-1)\cdot(sin\theta)-(log(2(N-2)-1)-log3)\cdot cos\theta.
\end{align}
As a consequence, for $N\geq 4$ we have
$$\delta_{\pi/2}=\max\limits_{\theta\in[0,\pi/2]}\delta_\theta.$$
\end{example}

\begin{proof}
For any $1\neq g\in G$, it is clear that $g$ can be expressed uniquely in terms of $g_1,g_2,\cdots,g_N$ by definition and the fact that $F_N$ is free.
We may assume $g=(g_{(1)},g_{(2)})=g_{i_1}^{k_1} g_{i_2}^{k_2}\cdots g_{i_s}^{k_s}$, where $i_1,i_2,\cdots,i_s\in \{1,2,\cdots,N\}$ and $k_1,\cdots,k_s$ are integers.
For convenience, one can assume $i_1,i_2,\cdots,i_{t}\in \{1,2\}$ and $i_{t+1},i_{t+2},\cdots,i_{s}\in \{3,4,\cdots,N\}$ with $1\leq t\leq N$.
Then we have
\begin{align}\label{tang}
tan\theta(g)=\frac{d_2(1,g_{(2)})}{d_1(1,g_{(1)})}=\frac{\sum\limits_{j=1}^{N}|k_j|}{\sum\limits_{j=1}^t|k_j|}.
\end{align}
It is apparent from (\ref{tang}) that $\theta(g)\in [\pi/4,\pi/2]$, $\delta_\theta\geq 0$ for $\theta\in [\pi/4,\pi/2]$ and $\delta_\theta=-\infty$ for $\theta\in [0,\pi/4)$.
For any $g\in A_{\epsilon,\theta}(o,n)$ with $\epsilon>0$, $\theta\in [\pi/4,\pi/2]$,
it is not hard to show there exists $c(\epsilon,n)>0$ with $\lim\limits_{\epsilon\rightarrow 0}\lim\limits_{n\rightarrow \infty}c(\epsilon,n)/n=0$ such that
\begin{align}\label{g1g2}
|d_2(1,g_{(2)})-nsin\theta|\leq c(\epsilon,n),\ |d_1(1,g_{(1)})-ncos\theta|\leq c(\epsilon,n).
\end{align}
Recall that $g$ can be expressed uniquely in terms of $g_1,g_2,\cdots,g_N$. If one assumes $n_1$ (resp. $n_2$) to be the frequency of occurrence of $\{g_1^{\pm1},g_2^{\pm1}\}$ (resp. $g_3^{\pm1}=(1,b_3)^{\pm1},g_4^{\pm1}=(1,b_4)^{\pm1},\cdots,g_N^{\pm1}=(1,b_N)^{\pm1}$) for the expression of $g$, via (\ref{tang}) we get
\begin{align}\label{n12}
d_2(1,g_{(2)})=n_1+n_2, d_1(1,g_{(1)})=n_1.
\end{align}
Using (\ref{g1g2}), (\ref{n12}), we have
\begin{align}\label{estn12}
|n_2-n(sin\theta-cos\theta)|\leq 2c(\epsilon,n),\ |n_1-ncos\theta|\leq c(\epsilon,n).
\end{align}
On the other hand, it is well-known that the finitely generated free groups have purely exponential growth (see \cite{11}, \cite{13}). Note that for any positive integer $m$, we have $\delta(F_m)=log(2m-1)$ with respect to the standard generating set. Hence via (\ref{estn12}), we have
\begin{align}
& \frac{1}{c(F_{N-2})}exp\{log(2(N-2)-1)\cdot (n(sin\theta-cos\theta)-2c(\epsilon,n))\}\nonumber
\\&\cdot\frac{1}{c(F_2)}exp\{log3\cdot(ncos\theta-c(\epsilon,n))\}\nonumber
\\&\leq A_{\epsilon,\theta}(o,n)\label{f2fn}
\\&\leq c(F_{N-2})exp\{log(2(N-2)-1)\cdot (n(sin\theta-cos\theta)+2c(\epsilon,n))\}\nonumber
\\&\cdot c(F_2)exp\{log3\cdot(ncos\theta+c(\epsilon,n))\}.\nonumber
\end{align}
where $c(F_2)$,(resp. $c(f_N)$) are constants depending only on $F_2$ (resp. $F_N$).

Thus this yields the following for $N\geq 4$
\begin{align}
\delta_\theta=&\liminf\limits_{\epsilon\rightarrow 0}\limsup\limits_{n\rightarrow \infty}\frac{log|A_{\epsilon,\theta}(o,n)|}{n}\nonumber
\\&=log(2(N-2)-1)\cdot (sin\theta-cos\theta)+log3\cdot cos\theta\nonumber
\\&=log(2(N-2)-1)\cdot (sin\theta)-(log(2(N-2)-1)-log3)\cdot cos\theta\nonumber
\\&\leq log(2(N-2)-1)=\delta_{\pi/2}.\nonumber
\end{align}

\end{proof}

In the next, we will present Theorem \ref{ques 1} which provides an affirmative answer to Question \ref{link} under a quite mild condition.
\begin{thm}\label{ques}
Assume that $(X,d)=(X_1,d_1)\times (X_2,d_2)$ is a product of Hadamard spaces on which a group $G\subset Is(X_1)\times Is(X_2)$ acts properly, $d$ is the standard product metric. If $G$ contains a pair of isometries which projects a pair of independent rank one elements in each factor and satisfies $\delta_\gamma>0$ for \textbf{$\gamma\in (0,\pi/2)$},
with the following condition:
\begin{align}\label{growth small}
\frac{\delta_{\pi/2}}{\delta_\theta}< \frac{1}{sin\theta}\ and\ \frac{\delta_{0}}{\delta_{\pi/2-\beta}}< \frac{1}{sin\beta},
\end{align}
for some $\theta,\beta\in (0,\pi/2)$.

Then there exists some $\theta_\star\in (0,\pi/2)$ such that $\delta_{\theta_\star}=\max\limits_{\theta\in[0,\pi/2]}\delta_\theta$.

\end{thm}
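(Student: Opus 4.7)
The plan is to combine the continuity of $\delta_\theta$ established in Theorem \ref{main theorem1} with the concavity of $\Psi_G$ from Lemma \ref{con}. First, I observe that the hypothesis $\delta_\gamma>0$ for all $\gamma\in(0,\pi/2)$, together with G. Link's upper semi-continuity of $\delta_\theta$, forces $\delta_0,\delta_{\pi/2}\geq 0$; hence the regular growth condition holds on the whole interval $[0,\pi/2]$, Theorem \ref{main theorem1} gives continuity on $[0,\pi/2]$, and by compactness $\delta_\theta$ attains its maximum at some $\theta_\star\in[0,\pi/2]$. It therefore remains to rule out $\theta_\star=0$ and $\theta_\star=\pi/2$.

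Suppose for contradiction $\theta_\star=\pi/2$, and fix the $\theta\in(0,\pi/2)$ from the first inequality in (\ref{growth small 1}). For every $\theta'\in(\theta,\pi/2)$ a direct trigonometric check gives the nonnegative linear combination
\[
H_{\theta'}=\frac{\cos\theta'}{\cos\theta}H_\theta+\frac{\sin(\theta'-\theta)}{\cos\theta}H_{\pi/2}.
\]
Applying Lemma \ref{con} (concavity and positive $1$-homogeneity of $\Psi_G$) then yields
\[
\delta_{\theta'}\ge\frac{\cos\theta'}{\cos\theta}\delta_\theta+\frac{\sin(\theta'-\theta)}{\cos\theta}\delta_{\pi/2}.
\]
Since $\delta_{\theta'}\le\delta_{\pi/2}$ by the assumption $\theta_\star=\pi/2$, this rearranges to $\delta_{\pi/2}(\cos\theta-\sin(\theta'-\theta))\ge \cos\theta'\cdot\delta_\theta$. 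Letting $\theta'\to(\pi/2)^-$, a short L'Hopital computation shows that the ratio $\cos\theta'/(\cos\theta-\sin(\theta'-\theta))$ tends to $1/\sin\theta$, so the inequality passes in the limit to $\delta_{\pi/2}\sin\theta\ge\delta_\theta$, contradicting the first inequality in (\ref{growth small 1}).

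The exclusion of $\theta_\star=0$ proceeds symmetrically. Setting $\theta''=\pi/2-\beta$ with $\beta$ from the second hypothesis and taking $\theta'\in(0,\theta'')$, I would use the analogous decomposition
\[
H_{\theta'}=\frac{\sin(\theta''-\theta')}{\sin\theta''}H_0+\frac{\sin\theta'}{\sin\theta''}H_{\theta''}.
\]
The same concavity inequality combined with $\delta_{\theta'}\le\delta_0$ gives $\delta_0(\sin\theta''-\sin(\theta''-\theta'))\ge\sin\theta'\cdot\delta_{\theta''}$; letting $\theta'\to 0^+$ and applying L'Hopital recovers $\delta_0\cos\theta''\ge\delta_{\theta''}$, i.e. $\delta_0\sin\beta\ge\delta_{\pi/2-\beta}$, contradicting the second inequality in (\ref{growth small 1}). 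Combining both contradictions forces $\theta_\star\in(0,\pi/2)$.

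The main obstacle is to verify that the two limit computations, where numerator and denominator both vanish, recover \emph{exactly} the constants $1/\sin\theta$ and $1/\sin\beta$ appearing in (\ref{growth small 1}); matching these constants is precisely what makes the upper bound in (\ref{growth small 1}) sharp. A secondary subtlety is to check that $\delta_\gamma>0$ on the open interval, via upper semi-continuity at the two endpoints, is genuinely strong enough to force $\delta_0,\delta_{\pi/2}\geq 0$, which is what permits the application of Theorem \ref{main theorem1} on the closed interval $[0,\pi/2]$ and hence the existence of $\theta_\star$ in the first place.
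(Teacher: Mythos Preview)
Your proof is correct. Both your argument and the paper's rest on the same two pillars---upper semi-continuity at the endpoints to secure $\delta_0,\delta_{\pi/2}\ge 0$ (hence continuity on $[0,\pi/2]$), and the concave $1$-homogeneous function $\Psi_G$ from Lemma~\ref{con}---followed by a limiting computation that recovers the sharp constant $1/\sin\theta$. The execution differs, however. The paper parametrizes by $tx+(1-t)y$ with $\theta(x)=\pi/2$, first proves the identity $\|tx+(1-t)y\|\sin\theta(tx+(1-t)y)=t\|x\|\sin\theta(x)+(1-t)\|y\|\sin\theta(y)$ (essentially linearity of the second coordinate, though established there via a lengthy Euclidean-geometry argument), and then uses an L'H\^opital limit to show directly that $\delta_{\theta(tx+(1-t)y)}>\delta_{\pi/2}$ for $t$ near~$1$. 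Your route is more economical: you write down the explicit nonnegative decomposition $H_{\theta'}=\frac{\cos\theta'}{\cos\theta}H_\theta+\frac{\sin(\theta'-\theta)}{\cos\theta}H_{\pi/2}$, apply superadditivity of $\Psi_G$ in one line, and pass to the limit $\theta'\to\pi/2^-$ under the contradiction hypothesis $\theta_\star=\pi/2$. This bypasses the auxiliary identity entirely and makes the appearance of $1/\sin\theta$ transparent. One small point you might make explicit: the coefficients in your decomposition do not sum to~$1$, so you are using that a concave positively $1$-homogeneous function satisfies $\Psi(ax+by)\ge a\Psi(x)+b\Psi(y)$ for all $a,b\ge 0$, not just convex combinations; this is immediate from homogeneity but worth one sentence.
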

\begin{proof}
Since $\delta_0\geq \limsup\limits_{\theta_j\rightarrow 0}\delta_{\theta_j}\geq 0$ and $\delta_{\pi/2}\geq \limsup\limits_{\theta_i\rightarrow \pi/2}\delta_{\theta_i}\geq 0$,
where $\theta_j,\theta_i\in (0,\pi/2)$. Then $\delta_{\theta}$ are nonnegative in $[0,\pi/2]$.

Recall that for any $0\neq x=(x_1,x_2)\in \mathbb{R}^2_{\geq 0}$, $\theta(x)=arctan\frac{x_1}{x_1}$ and set $\theta(0)=0$. The key ingredient of the proof is the observation:

The following holds for any $x,y\in \mathbb{R}^2_{\geq 0}$ and any $t\in [0,1]$,
\begin{align}\label{identity}
||tx+(1-t)y||{sin(\theta(tx+(1-t)y))}=t||x||sin(\theta(x))+(1-t)||y||sin(\theta(y)).
\end{align}
\begin{proof}

It is trivial for $x=0$ or $y=0$. Hence we may assume $||x||$, $||y||>0$. One can identify any vector in $\mathbb{R}^2_{\geq 0}$ with a point in the plane, so we let $O$ denote $0$ and $P\in \mathbb{R}^2_{\geq 0}$ (resp. $Q$) denote $x$ (resp. $y$). Suppose $\hat d$ is the standard Euclidean metric of the plane.

We may assume $0\leq \theta(x)<\theta(y)\leq \pi/2$. Denote by $R$ the vector ${tx+(1-t)y}$. One can extend the segment $[O,R]$ to the point $T$ such that
$$\frac{\hat d(O,R)}{\hat d(T,R)}=\frac{\hat d(P,R)}{\hat d(Q,R)}.$$
Note that the triangle $\Delta TQR$ is similar to the triangle $\Delta OPR$. Thus we have
\begin{align}
&\frac{\hat d(O,R)}{\hat d(T,R)}=\frac{\hat d(P,R)}{\hat d(Q,R)}=\frac{\hat d(O,P)}{\hat d(T,Q)}=\frac{1-t}{t}.\label{simi1}
\\&\hat d(O,T)=\hat d(O,R)+\hat d(R,T)\label{simi2}
\\&=||tx+(1-t)y||+\frac{t}{1-t}||tx+(1-t)y||\nonumber
\\&=\frac{||tx+(1-t)y||}{1-t}.\nonumber
\end{align}

If we set $\alpha=\angle TOP$, $\beta=\angle TOQ$, we have
\begin{align}\label{angle sum}
\alpha+\beta=\theta(y)-\theta(x).
\end{align}
Using the Law of Sines for the triangle $\Delta TQO$, (\ref{simi1}) and (\ref{simi2}), we get
\begin{align}
&\frac{sin\alpha}{sin(\pi-\alpha-\beta)}=\frac{\hat d(O,Q)}{\hat d(O,T)}=\frac{(1-t)||y||}{||tx+(1-t)y||},\label{angle rela1}
\\&\frac{sin\beta}{sin(\pi-\alpha-\beta)}=\frac{\hat d(T,Q)}{\hat d(O,T)}=\frac{t||x||}{||tx+(1-t)y||}.\label{angle rela2}
\end{align}
Thus we have
\begin{align}
&||tx+(1-t)y||{sin(\theta(tx+(1-t)y))}\nonumber
\\&=t||x||sin(\theta(x))+(1-t)||y||sin(\theta(y))\nonumber
\\&\Leftrightarrow \frac{sin\beta sin\theta(x)+sin\alpha sin\theta(y)}{sin(\theta(y)-sin\theta(x))}=sin(\alpha+\theta(x))\label{p2}
\\&\Leftrightarrow sin\beta sin\theta(x)+sin\alpha sin\theta(y)\label{p3}
\\&=(sin\alpha cos\theta(x)+cos\alpha sin\theta(x))sin(\theta(y)-sin\theta(x))\nonumber
\\&\Leftrightarrow sin\alpha(sin((\theta(y)-\theta(x))+\theta(x))-cos\theta(x)sin(\theta(y)-\theta(x)))\label{p4}
\\&=(cos\alpha sin(\theta(y)-\theta(x))-sin((\theta(y)-\theta(x))-\alpha))sin\theta(x)\nonumber
\\&\Leftrightarrow sin\alpha sin\theta(x)cos(\theta(y)-\theta(x))\label{p5}
\\&=cos(\theta(y)-\theta(x))sin\alpha sin\theta(x),\nonumber
\end{align}
where (\ref{p2}) we used (\ref{angle rela1}) and (\ref{angle rela2}), (\ref{p4}) we used (\ref{angle sum}), and (\ref{p3}) (\ref{p5}) we used the basic properties of sine functions.
\end{proof}
Recall that for any $0\neq x,y\in \mathbb{R}^2_{\geq 0}$, we have
\begin{align}\label{conca}
||tx+(1-t)y||\delta_{\theta(tx+(1-t)y)}\geq t||x||\delta_{\theta(x)}+(1-t)||y||\delta_{\theta(y)}.
\end{align}
Take
\begin{align}\label{xx}
\theta(x)=\pi/2,
\end{align}
and fix $0\neq x,y\in \mathbb{R}^2_{\geq 0}$ with $0<\theta(y)<\pi/2$.

Similar to (\ref{4}), one can deduce that $\theta(tx+(1-t)y)$ is smooth on $t\in [0,1]$. Then this yields
\begin{align}
&\lim\limits_{t\rightarrow 1}\frac{||tx+(1-t)y||(1-sin\theta(tx+(1-t)y))}{(1-t)||y||}\nonumber
\\&=\frac{||x||}{||y||}\lim\limits_{t\rightarrow 1}cos\theta(tx+(1-t)y)\theta'(tx+(1-t)y)\label{lhos}
\\&=\frac{||x||}{||y||}cos\theta(x)\theta'(x)\nonumber
\\&=0,\label{ppp3}
\end{align}
where we applied L'Hospital's Rule to get (\ref{lhos}) and applied (\ref{xx}) to get (\ref{ppp3}).

By (\ref{ppp3}) and $\frac{\delta_{\pi/2}}{\delta_\theta}< \frac{1}{sin\theta}$ , for $t\in (0,1)$ such that $t$ is sufficiently close to $1$ we obtain
\begin{align}\label{ppp4}
\frac{(1-t)||y||}{||tx+(1-t)y||(1-sin\theta(tx+(1-t)y))+(1-t)||y||sin\theta(y)}>\frac{\delta_{\pi/2}}{\delta_\theta(y)}.
\end{align}
Combining (\ref{ppp4}), (\ref{xx}) with (\ref{identity}), one can have
\begin{align}
t||x||\delta_{\theta(x)}+(1-t)||y||\delta_{\theta(y)}&=t||x||\delta_{\pi/2}+(1-t)||y||\delta_{\theta(y)}\nonumber
\\&> ||tx+(1-t)y||\delta_{\pi/2}=||tx+(1-t)y||\delta_{\theta(x)}.\label{ppp5}
\end{align}
On the other hand, by (\ref{conca}) and (\ref{ppp5}) we get
\begin{align*}
||tx+(1-t)y||\delta_{\theta(tx+(1-t)y)}>||tx+(1-t)y||\delta_{\pi/2}.
\end{align*}
Therefore we deduce $\delta_{\theta(tx+(1-t)y)}>\delta_{\pi/2}$ for some $t\in (0,1)$ such that $t$ is sufficiently close to $1$.

If one exchange $X_1$ and $X_2$, then for $G\curvearrowright X=X_2\times X_1$ we have $\tilde{\delta}_\beta=\delta_{\pi/2-\beta}$. Similarly, for some $t\in (0,1)$ such that $t$ is sufficiently close to $1$, we have
$$\delta_{\pi/2-\theta(tx+(1-t)y)}=\tilde{\delta}_{\theta(tx+(1-t)y)}>\tilde{\delta}_{\pi/2}=\delta_0.$$
Thus we complete it.
\end{proof}
\begin{rmk}
Recall the example \ref{counterxample}. If one takes $N=4$, then $\delta_{\theta}=sin\theta\cdot \delta_{\pi/2}$ by the equation (\ref{growth 1}). Hence $\frac{\delta_{\pi/2}}{\delta_\theta}=\frac{1}{sin\theta}.$ This implies the upper bound in (\ref{growth small}) for $\frac{\delta_{\pi/2}}{\delta_\theta}$ in Theorem \ref{ques} is sharp.
\end{rmk}

It is time to give a proof of Theorem \ref{0density}.
\begin{proof}
We may assume $\delta_{0}=\max\limits_{\theta\in[0,\pi/2]}\delta_\theta$. Then by Theorem \ref{ques}, the inequalities (\ref{growth small}) fails. Hence, for all $\theta\in (0,\pi/2)$ we have
\begin{align}\label{i1}
\frac{\delta_{\pi/2}}{\delta_\theta}\geq\frac{1}{sin\theta}
\end{align}
or
\begin{align}\label{i2}
\frac{\delta_{0}}{\delta_{\pi/2-\theta}}\geq\frac{1}{sin\theta}.
\end{align}
Recall that $\delta_{0}=\max\limits_{\theta\in[0,\pi/2]}\delta_\theta>0$, if the inequality (\ref{i1}) holds and by Theorem \ref{main theorem1}, we can deduce $\delta_{\pi/2}\geq \frac{\delta_{\theta_1}}{sin\theta_1}>\delta_0$ for some small enough $\theta_1\in (0,\pi/2)$, which is a contradiction.

Thus the inequality (\ref{i2}) holds. By (\ref{i2}) and Theorem \ref{main theorem1}, one can get $0\leq \delta_{\pi/2}\leq \delta_0sin0=0$. Then we obtain $\delta_{\pi/2}=0$.

Claim: one can choose $b_1=\delta_0,b_2=\delta_{\pi/2}$ in the proof of Proposition 6.6 in \cite{1}, which is used to construct some $(b,\theta)$-density.

\begin{proof}[Proof of the Claim]
It suffices to show that $\delta_\theta\geq \delta_0cos\theta+\delta_{\pi/2}sin\theta$. Recall that $H_\theta=(cos\theta,sin\theta)$ for $\theta\in[0,\pi/2]$. It is straightforward to verify that, for $t=\dfrac{sin\theta}{1+cos\theta}$, we have
\begin{align}
&tH_{\pi/2}+(1-t)H_0=(t^2+(1-t)^2)^{1/2}H_\theta,\nonumber
\\&\dfrac{t}{(t^2+(1-t)^2)^{1/2}}=sin\theta,\ \dfrac{1-t}{(t^2+(1-t)^2)^{1/2}}=cos\theta. \label{i3}
\end{align}

By Lemma \ref{con}, we get
\begin{align}\label{i4}
&(t^2+(1-t)^2)^{1/2}\delta_\theta=(t^2+(1-t)^2)^{1/2}\Psi(H_\theta)
\\=&\Psi(tH_{\pi/2}+(1-t)H_0)\geq t\Psi(H_{\pi/2})+(1-t)\Psi(H_0)=t\delta_{\pi/2}+(1-t)\delta_0.
\end{align}
Using (\ref{i3}), one can obtain $\delta_\theta\geq \delta_0cos\theta+\delta_{\pi/2}sin\theta$.
\end{proof}

So $b_2=\delta_{\pi/2}=0$, thus we complete the proof.

\end{proof}

\begin{acknowledgments}
The author is very grateful to Prof. Wen-yuan Yang and Prof. Jinsong Liu for many helpful discussions and suggestions.
\end{acknowledgments}


\begin{thebibliography}{99}
  \bibitem{Albuquerque:1999}
Albuquerque, P.
``Patterson-Sullivan theory in higher rank symmetric spaces.''
\textit{Geom. Funct. Anal.}  9  no. 1 (1999): 1-28.

\bibitem{7}
   Ballmann, W.
  \textit{Lectures on spaces of nonpositive curvature. volume 25 of DMV Seminar}
  Basel: Birkh{\"a}user-Verlag, 1995.

  \bibitem{8} Ballmann, W. and M. Brin.
``Orbihedra of nonpositive curvature.''
\textit{Inst. Hautes {\'E}tudes Sci. Publ. Math.}   no. 82 (1996): 169-209.

\bibitem{12}  Bestvina, M. and K,Fujiwara.
``A Characterization of Higher Rank Symmetric Spaces Via Bounded Cohomology.''
\textit{Geom. Funct. Anal.} 19, no. 1 (2009): 11-40.

\bibitem{9} Bridson, M. R. and A, Haefliger.
  \textit{Metric spaces of non-positive curvature. Grundlehren der Mathematischen Wissenschaften [Fundamental Principles of Mathematical Sciences]}
  Berlin: Springer-Verlag, 1999.


\bibitem{Coornaert;1993} Coornaert, M. ``Mesures de Patterson-Sullivan sure le bord d'un espace hyperbolique au sens
de Gromov.''
\textit{Pac. J. Math.} (1993), no. 2, 241-270.





  \bibitem{Dal'Bo_Kim:2008}
Dal'Bo, F. and I. Kim.
``Shadow lemma on the product of Hadamard manifolds and applications.''
\textit{Actes du Seminaire de Theorie Spectrale et Geometrie. Vol. 25. Annee Univ. Grenoble I 2006-2007} (2008): 105-119.


\bibitem{Dal'bo;2000} Dal'bo, F, P. Otal, and M. Peign{\'e}, ``S{\'e}ries de Poincar{\'e} des groupes g{\'e}om{\'e}triquement finis.''
\textit{Isr. Jour. of math.} 118 , no. 3 (2000): 109-124.


\bibitem{Link;2004} Link, G.  ``Hausdorff dimension of limit sets of discrete subgroups of
higher rank Lie groups.''
\textit{Geom. Funct. Anal.} 14  no. 2 (2004):400-432.


\bibitem{2} Link, G.
``Asymptotic geometry in products of Hadamard spaces with rank one isometries.''
\textit{Geom. Topol.}  no. 2 (2010): 1063-1094.

\bibitem{1} Link, G.  ``Generalized Patterson-Sullivan measures for products of Hadamard spaces.''
\textit{arXiv}  (2011): 1107.3755.

  \bibitem{15} Link, G.  ``Generalized conformal densities for higher products of rank one Hadamard spaces.''
\textit{Geom. Dedi.}  no. 178 (2015): 351-387.

  \bibitem{14} Link, G.  ``Asymptotic geometry in higher products of rank one Hadamard spaces.''
\textit{Groups Geom. Dyn.}  no. 3 (2016): 885-931.

\bibitem{Marc:1993} Marc, B.  ``Intersection, the Manhattan curve, and Patterson-Sullivan theory in rank 2.''
\textit{Int. Math. Res. Not.} no. 7 (1993): 217-225.



   \bibitem{3} Patterson, S. J.  ``The limit set of a Fuchsian group.''
\textit{Acta Math.}  136  no. 3-4 (1976): 241-273.



  \bibitem{4} Quint, J. F.  ``Mesures de Patterson-Sullivan en rang sup{\'e}rieur.''
\textit{Geom. Func. Ana.}  12  no. 4 (2002): 776-809.

  \bibitem{5} Sullivan, D.
``The density at infinity of a discrete group of hyperbolic motions.''
\textit{Inst. Hautes {\'E}tudes Sci. Publ. Math.}  no. 50 (1979): 171-202.









  \bibitem{10} Yang, W.
``Patterson-Sullivan measures and growth of relatively hyperbolic groups.''
\textit{arXiv}   (2013): 1308.6326.

  \bibitem{11} Yang, W. ``Statistically convex-cocompact actions of groups with contracting elements.''
\textit{Int. Math. Res. Not. IMRN.}   no. 23 (2019): 7259-7323.

  \bibitem{13} Yang, W. ``Purely exponential growth of cusp-uniform actions.''
\textit{Ergodic Theory Dynam. Systems}     39 , no. 3 (2019): 795-831.

\end{thebibliography}
\end{document}